\begin{document}

\newtheorem{theorem}{Theorem}[section]
\newtheorem{definition}[theorem]{Definition}
\newtheorem{lemma}[theorem]{Lemma}
\newtheorem{remark}[theorem]{Remark}
\newtheorem{proposition}[theorem]{Proposition}
\newtheorem{corollary}[theorem]{Corollary}
\newtheorem{example}[theorem]{Example}

\setlength\parindent{0pt}

%
%\title{Homogenization of a discrete traffic road model with lights}
%
%\author{ 
%Written by
%J\'er\'emy FIROZALY under the supervision of Cyril IMBERT and Régis MONNEAU $^{1}$
%\\\\ ENSTA supervisor: Christophe HAZARD}
%
%
%{\addtocounter{footnote}{1} 
%\footnotetext{University of Paris East, internship in LAMA and CERMICS laboratories. 
%Email: \texttt{jeremy.firozaly@hotmail.fr}}
%\date{Due 18 September 2014. \vspace{-10mm}
%}
%
%\maketitle
%
%\vspace{3 cm}
%\begin{center}
%\begin{figure}[!h] %on ouvre l'environnement figure
%\includegraphics[scale=0.8]{logos.jpg} %ou image.png, .jpeg etc.
%\label{logo} %l'étiquette pour faire référence à cette image
%\end{figure} %on ferme l'environnement figure
%\end{center}
%\vspace{5 cm}
%\begin{center}
%This report values as PFE report for ENSTA ParisTech and master project report for UPMC University.\\
%\textcolor{red}{This report is not confidential.}
%\end{center}

%\maketitle
\title{Homogenization of a 1D pursuit law with delay and a counter-example}

\author{J\'er\'emy FIROZALY, Université Paris-Est, Cermics (ENPC), F-77455 Marne-la-Vallée }

\maketitle

\begin{abstract}
In this paper, we consider a one dimensional pursuit law with delay which is derived from traffic flow modelling. It takes the form of an infinite system of first order coupled delayed equations. Each equation describes the motion of a driver who interacts with the preceding one, taking into account his reaction time. We derive a macroscopic model, namely a Hamilton-Jacobi equation, by a homogenization process for reaction times that are below an explicit threshold. The key idea is to show, that below this threshold, a strict comparison principle holds for the infinite system. In a second time, for well-chosen dynamics and higher reaction times, we show that there exist some microscopic pursuit laws that do not lead to the previous macroscopic model. 
\end{abstract}

\textbf{Keywords:} Hamilton-Jacobi equations, infinite system, delay time, strict comparison principle, homogenization.
\section{Introduction}

In the present paper we consider an infinite system of delay differential equations (DDEs). This form can especially be derived from a one dimensional pursuit law on a straight road with a very simple model. Such models, where we follow the time position of every driver are called microscopic models. The effect of drivers' reaction times on the stability of the corresponding systems of DDEs is studied in \cite{atay2010complex} and \cite{nagatani2002physics}. Presentations of some classical microscopic traffic flow models of first order or second order are made in \cite{chand}, \cite{gipps} or \cite{costeseque}. Conversely, macroscopic models describe the time evolution of densities in global traffic flow. We want to study if we can recover a macroscopic model from our microscopic one. Formally, it corresponds to making the interdistance between drivers go to zero or to observe the pursuit law from a height and time that go to infinity. When possible, the passage can be made rigorous by an homogenization process that will be done here in the framework of viscosity solutions of Hamilton-Jacobi equations. The interested reader is referred to the pionnering works \cite{LPV} or \cite{Evans} about homogenization of Hamilton-Jacobi equations. He can see the works of \cite{light} or \cite{richards} for the historical approach used to define and analyse macroscopic traffic flow models and links with fluid mechanics. Several examples (and counter-examples) will be exhibited throughout this paper to show that both the initial dynamics and reaction time values will have a huge influence on the homogenization process. 

This paper derives from R\'egis Monneau's work \cite{monne} (see also \cite{costeseque}).

\subsection{Description of the model and main results}
The common velocity of each driver is supposed to be a Lipschitz continuous, bounded, nondecreasing function, $F$ say, of the distance that separates each driver from the preceding one (with $F(0)=0$). Each driver has its own reaction time and we assume that the reaction times are uniformly bounded from above and from below; we denote $\tau$ the upper bound and $\xi>0$ the lower bound. 
%We assume that $\{\tau_i, i\in\mathbb{Z} \}$ is bounded and denote $\tau$ its upper bound. 
Let us study the evolution of the system during a time $T\in(0,+\infty]$ with $T>\tau$.

Given a sequence $(X_i)_{i\in \mathbb{Z}}$ of drivers’ positions on the road, the microscopic model then takes the form of an infinite system of DDEs:
\begin{align} \label{microdelay}
\frac{dX_i}{dt}(t)=F(X_{i+1}(t-\tau_i)-X_{i}(t-\tau_i))  \hspace{3 cm} (i,t)\in \mathbb{Z}\times (0,T),
\end{align}
where $\tau_i$ denotes the individual reaction time.

If we suppose that we know the dynamics of the cars during the initial time interval, we can define time functions $(x_i^0)_{i\in\mathbb{Z}}$ such that: 
\begin{align}\label{microinitial}
X_i(t)=x_i^0(t)   \qquad \forall t\in [-2\tau,0].
\end{align}

The first step is to embed the microscopic system dynamics into a single PDE. In order to go from the microscopic scale to the macroscopic one, we have to introduce a small parameter, $\varepsilon>0$ say, so as to rescale properly the function and to take into account the microscopic space and time oscillations with high frequency.

Hence, by hyperbolic change of variables, we define the function $u^\varepsilon$ as follows (the space variable will be denoted as $x$):
\begin{align}
u^\varepsilon(x,t)=\varepsilon X_{\lfloor \frac{x}{\varepsilon} \rfloor}(\frac{t}{\varepsilon}).
\end{align}
We suppose that there exists a Lipschitz continuous function $u_0: \mathbb{R} \times [-2\tau,0] \rightarrow \mathbb{R}$ such that: $$ x_i^0(t)=\frac{u_0(i\varepsilon, t \varepsilon)}{\varepsilon} \hspace{3cm} (i,t)\in \mathbb{Z}\times [-2\tau,0],$$
and a function $\tau_0: \mathbb{R} \rightarrow [\xi,\tau]$ such that: $$\tau_i=\tau_0(i\varepsilon) \qquad \forall i\in\mathbb{Z}. $$
%$\tau\in \bigg(0;\frac{1}{e}\bigg)$
The rescaled model can thus be embedded into the following equation: 
\begin{align}
\left\{
\begin{array}{l}
  \partial_t u^\varepsilon (x,t) =F\left(\frac{u^\varepsilon (x+\varepsilon,t-\varepsilon\tau_0(x))-u^\varepsilon (x,t-\varepsilon \tau_0(x))}{\varepsilon}\right) \hspace{1 cm} (x,t)\in \mathbb{R}\times (0,T),  \label{exo} \\
  u^\varepsilon (x,s)=u_0(x,s) \hspace{4.8 cm} (x,s) \in \mathbb{R} \times [-2\varepsilon \tau,0],
\end{array}
\right.
\end{align}
%%%%%%% -2\varepsilon \tau pour condition initiale au lieu de -\varepsilon \tau car dans hypothèse du principe compa strict on a besoin d'avoir d définie à partir de -2\tau!!!!!!!!!!!!!!!!!!!!!!!!!!!!!!!!
%%%%%%
with the following assumptions on $F$, $\tau_0$ and $u_0$:
%%%%%%%
\begin{align}
\left\{
\begin{array}{l}
 F \mbox{ is a non-decreasing, bounded and } C_F-\mbox{Lipschitz continuous function on }\mathbb{R},  \\
 \overline{\tau_0(\mathbb{R})}= [\xi, \tau], \\
   u_0 \mbox{ is a globally } L-\mbox{Lipschitz continuous function on }\mathbb{R} \times [-2\tau,0] . \label{assumptions}
\end{array}
\right.
\end{align}
As the argument of $F$ in \eqref{exo} converges towards a space derivative for vanishing $\varepsilon$, it is natural to introduce the following dynamics in the form of a first-order Hamilton-Jacobi equation:
\begin{align}
\left\{
\begin{array}{l}
  \partial_t u^0 (x,t) =F(\partial_x u^0 (x,t)) \hspace{1 cm} (x,t)\in \mathbb{R}\times (0,T),  \label{exo2} \\
  u^0(x,0)=u_0(x,0) \hspace{2.6 cm} x \in \mathbb{R}.
\end{array}
\right.
\end{align}
The first goal of the article is to prove that $u^\varepsilon$ tends locally uniformly towards $u^0$: 
\begin{theorem}[Homogenization of the delay equation] \label{convergenceth}$\,$\\
For $\tau\in \bigg(0;\frac{1}{e C_F}\bigg)$ and under assumptions \eqref{assumptions}, the solution $u^\varepsilon$ to \eqref{exo} converges locally uniformly towards the (unique viscosity) solution of \eqref{exo2}.
\end{theorem}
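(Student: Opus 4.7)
The strategy is the Evans perturbed test function method combined with the Barles--Perthame half-relaxed limits. I would proceed in three steps: uniform regularity of the family $\{u^\varepsilon\}$, identification of the half-relaxed limits as sub/supersolutions of \eqref{exo2}, and a comparison-principle conclusion for the limit equation.

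\textbf{Step 1 (uniform regularity).} The time-Lipschitz bound $|\partial_t u^\varepsilon|\leq \|F\|_\infty$ is immediate from \eqref{exo} and the boundedness of $F$. Uniform space-Lipschitz control is the delicate point: for any shift $h$, the function $(x,t)\mapsto u^\varepsilon(x+h,t)$ also solves \eqref{exo} with shifted initial datum, so comparing it to $u^\varepsilon$ via the strict comparison principle for the infinite system of DDEs yields $|u^\varepsilon(x+h,t)-u^\varepsilon(x,t)|\leq L|h|$. This comparison principle is the structural ingredient promised by the abstract, and it is valid precisely when $\tau<1/(eC_F)$; that is where the threshold enters.

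\textbf{Step 2 (sub/supersolution property).} Define $\bar u = \limsup{}^{\!*} u^\varepsilon$ and $\underline u=\liminf{}_{\!*}\, u^\varepsilon$. Step~1 makes both functions Lipschitz, and the assumption on $u_0$ combined with the uniform time-Lipschitz bound forces $\bar u(\cdot,0)=\underline u(\cdot,0)=u_0(\cdot,0)$. For the subsolution property of $\bar u$, take a smooth $\phi$ so that $\bar u-\phi$ has a strict local maximum at $(x_0,t_0)$; standard arguments yield maximum points $(x_\varepsilon,t_\varepsilon)\to(x_0,t_0)$ of $u^\varepsilon-\phi$, at which \eqref{exo} reads
\[
\partial_t\phi(x_\varepsilon,t_\varepsilon)=F\!\left(\frac{u^\varepsilon(x_\varepsilon+\varepsilon,t_\varepsilon-\varepsilon\tau_0(x_\varepsilon))-u^\varepsilon(x_\varepsilon,t_\varepsilon-\varepsilon\tau_0(x_\varepsilon))}{\varepsilon}\right).
\]
The space-Lipschitz bound of Step~1 makes the argument of $F$ bounded, so up to a subsequence it converges to some $p\in\mathbb{R}$. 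To obtain $p\leq\partial_x\phi(x_0,t_0)$ I would exploit the local maximum of $u^\varepsilon-\phi$ at $(x_\varepsilon,t_\varepsilon)$ to upper-bound $u^\varepsilon(x_\varepsilon+\varepsilon,t_\varepsilon)-u^\varepsilon(x_\varepsilon,t_\varepsilon)$ by the corresponding difference of $\phi$, and then use the uniform time-Lipschitz bound to absorb the $O(\varepsilon)$ time-shift coming from the delay when comparing this difference with the finite difference appearing in the equation. Passing to the limit and using the monotonicity (and continuity) of $F$ then yields $\partial_t\phi(x_0,t_0)\leq F(\partial_x\phi(x_0,t_0))$. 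The supersolution property of $\underline u$ is symmetric.

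\textbf{Step 3 (conclusion).} The standard comparison principle for the first-order Hamilton--Jacobi equation \eqref{exo2}, with Lipschitz continuous $F$ and Lipschitz initial datum, gives $\bar u\leq \underline u$ on $\mathbb{R}\times[0,T)$. Combined with the trivial $\underline u\leq \bar u$, this implies $\bar u=\underline u=u^0$, which is equivalent to the locally uniform convergence $u^\varepsilon\to u^0$.

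\textbf{Main obstacle.} Two places really use the delay threshold: Step~1 depends on the strict comparison principle that holds only for $\tau<1/(eC_F)$, and Step~2 requires enough regularity in $t$ to absorb the time-shift $\varepsilon\tau_0(x_\varepsilon)$ inside $F$ without producing an error of order $1$. The latter point is the technical crux, since a naive bound $|u^\varepsilon(y,t_\varepsilon-\varepsilon\tau_0)-u^\varepsilon(y,t_\varepsilon)|=O(\varepsilon)$ applied separately to each term in the finite difference produces an $O(1)$ remainder after dividing by $\varepsilon$; one has to exploit the maximum-point information to control the difference of these two time-shifted values together, which is exactly where the strict comparison principle (and hence the threshold $\tau<1/(eC_F)$) intervenes again.
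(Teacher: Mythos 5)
Your overall architecture (uniform Lipschitz bounds via the strict comparison principle, Barles--Perthame half-relaxed limits, comparison for the limit equation) is the same as the paper's, and Steps 1 and 3 are essentially right --- up to a quantitative slip in Step 1: to verify the hypothesis \eqref{eqini1} of Theorem \ref{compa} you must compare $u^\varepsilon(\cdot+h,\cdot)$ with $u^\varepsilon+2Lh$ rather than $u^\varepsilon+Lh$, because \eqref{eqini1} requires the difference of the two solutions to be bounded \emph{below} by a positive $\delta$ and to satisfy the multiplicative bound by $\rho$ on the initial layer; the additive margin $Lh/\varepsilon$ is exactly what makes this checkable for small $\varepsilon$, and the resulting space-Lipschitz constant is $2L$, not $L$.

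The genuine gap is in Step 2, and it is the heart of the proof. You correctly diagnose that evaluating the equation at the maximum point of $u^\varepsilon-\phi$ and trying to replace the delayed finite difference of $u^\varepsilon$ by an undelayed one leaves an $O(1)$ remainder after dividing by $\varepsilon$ (the time shift costs $\|F\|_\infty\varepsilon\tau$ per term), but your proposed fix --- ``exploit the maximum-point information to control the two time-shifted values together'' --- does not close: the maximum-point inequality relates $u^\varepsilon(x_\varepsilon+\varepsilon,t_\varepsilon-\varepsilon\tau_0)$ to $u^\varepsilon(x_\varepsilon,t_\varepsilon)$, whereas the equation needs both terms at the shifted time, and restoring the second term again costs $\|F\|_\infty\varepsilon\tau$; one ends up with $\partial_t\phi\le F(\partial_x\phi-\tau_0\,\partial_t\phi+\|F\|_\infty\tau+o(1))$, which is not the subsolution inequality. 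The paper avoids taking the finite difference of $u^\varepsilon$ altogether. It chooses the test function with a strict margin, $\partial_t\varphi\ge\theta+F(\partial_x\varphi)$ near $(\bar x,\bar t)$; for the \emph{smooth} function $\varphi$ the delayed finite difference converges uniformly to $\partial_x\varphi$ (both entries are evaluated at the same shifted time, so there is no $O(1)$ loss), hence $\varphi+M_\varepsilon$ is a strict supersolution of the discrete delayed equation \eqref{exo} itself for small $\varepsilon$. It then applies Theorem \ref{compa} to the pair $(\varphi+M_\varepsilon,\,u^\varepsilon)$ on a rescaled space-time cylinder, verifying \eqref{hypborne} on the surrounding annulus (this is where the condition $\bar u\le\varphi-2\eta$ outside $B_r$ is used, giving $\varphi+M_\varepsilon-u^\varepsilon\ge\eta/2$ there) and \eqref{eqini1} via the Lipschitz bound on $\varphi+M_\varepsilon-u^\varepsilon$ together with the lower bound of order $\eta/(2\varepsilon)$; the conclusion $\varphi+M_\varepsilon>u^\varepsilon$ on the inner region contradicts the touching at $(x_\varepsilon,t_\varepsilon)$. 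So the threshold $\tau<\tfrac{1}{eC_F}$ enters Step 2 through the existence of a function $\rho$ satisfying \eqref{condrho}, needed to invoke Theorem \ref{compa} on this pair --- not through any pointwise viscosity inequality at the maximum. Without this (or an equivalent) mechanism, your Step 2 does not yield the subsolution property.
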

%The LWR model which is described by a scalar conservation law is then classically derived from the homogenized equation \eqref{exo2}.
The key idea of the convergence proof relies on a strict comparison principle. This one is stated for the microscopic model for $\varepsilon=1$:  
\begin{align}
\left\{
\begin{array}{l}
  \partial_t u (x,t) =F({u (x+1,t-\tau_0(x))-u (x,t- \tau_0(x))}) \hspace{1 cm} (x,t)\in \mathbb{R}\times (0,T),  \label{exo3} \\
  u (x,s)=u_0(x,s) \hspace{5 cm} (x,s) \in \mathbb{R} \times [-2 \tau;0].
\end{array}
\right.
\end{align}
In contrast with the classical cases where it is sufficient to keep the initial order of the solutions, here, because of the reaction time, those solutions (namely the vehicles in traffic flow modelling) must be sufficiently spaced out at initial times in the sense made precise in the following theorem: 
%%%%%%%%%%%%%%%%%%%%%%
\begin{theorem}[Strict comparison principle] \label{compa}$\,$\\
Under the assumptions \eqref{assumptions}, let $v$ and $u$ be respectively a supersolution and a subsolution of \eqref{exo3}. Let us assume that there exist $\delta>0$, $t_0\in [0,T)$, $R\in[0,+\infty)$, $x_0\in\mathbb{R}$, and $\rho$ a positive, non decreasing function on $\mathbb{R}$ such that:
\begin{align}
v(x,t)\geq u(x,t) \hspace{1 cm} \mbox{ for  } R\leq |x-x_0|\leq R+1,\hspace{0.3cm} t\in [t_0-\tau,T) \label{hypborne}
\end{align}
\begin{align}
\delta\leq (v-u)(x,t-\tau ')\leq \rho(\tau ') (v-u)(x,t)   \hspace{0.8 cm} \mbox{ for  } \tau '\in [0,\tau],  (x,t) \in [x_0-R,x_0+R] \times [t_0-\tau,t_0], \label{eqini1}
\end{align}
where $\rho$ is such that: 
\begin{align}
1+C_F \rho(\tau)\int_0^{\tau '}\rho(s)ds < \rho(\tau ')   \hspace{1 cm} \tau ' \in [0,\tau]. \label{condrho}
\end{align}
%%%%%%%%%%%%%%%%
%\begin{align}
%\partial_t d(x,t) \geq -\rho(\tau) d(x,t)   \hspace{1 cm} (x,t) \in [x_0-R,x_0+R]\times[t_0-\tau,t_0], \label{eqini2}
%\end{align}
%%%%%%%%%%%%
Then we have:
\begin{align}
 0<\delta e^{-C_F\rho(\tau)(t-t_0)}\leq v(x,t)-u(x,t) \hspace{1 cm}  \mbox{ for  } (x,t) \in [x_0-R,x_0+R]\times [t_0,T). \label{finalres}
\end{align}
\end{theorem}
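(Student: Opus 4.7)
Setting $w := v - u$, the plan is to establish \eqref{finalres} by a continuation (bootstrap) argument that jointly propagates in time, on $[x_0 - R, x_0 + R] \times [t_0, t]$, two inequalities: the target lower bound $\mathrm{(i)}$ $w(y, s) \geq \delta e^{-C_F \rho(\tau)(s - t_0)}$, and the delayed-time control $\mathrm{(ii)}$ $w(y, s - \tau') \leq \rho(\tau') w(y, s)$ for $\tau' \in [0, \tau]$. Both hold at $t = t_0$ by \eqref{eqini1}; property (ii) plays the role of an inductive companion encoding the delayed-time information needed to close the estimate.

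The key auxiliary step is a delay differential inequality for $w$. Writing $s := t - \tau_0(x)$ and subtracting the viscosity super/subsolution inequalities for $v$ and $u$ at $(x,t)$, I split according to the sign of $w(x+1, s) - w(x, s)$: the monotonicity of $F$ yields a nonnegative contribution in one case, while the $C_F$-Lipschitz bound gives a lower bound $-C_F(w(x,s) - w(x+1,s))$ in the other. Using $w(x+1, s) \geq 0$ to absorb the crossed term, this produces
\[
\partial_t w(x, t) \;\geq\; -\, C_F \, w(x, t - \tau_0(x)).
\]
The sign condition $w(x+1, s) \geq 0$ is supplied by \eqref{hypborne} when $x+1$ lies in the boundary ring, and by (i) otherwise; since assumptions \eqref{assumptions} force $u$ and $v$ to be Lipschitz continuous, the above inequality holds a.e.\ in $t$ and can be integrated.

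I would then argue by contradiction. Let $t^*$ be the supremum of those $t \in [t_0, T)$ for which (i) and (ii) both hold up to $t$; continuity of $w$ makes this set closed and \eqref{eqini1} ensures $t^* > t_0$, so (i) and (ii) hold at $t^*$. Integrating the DDE from $t^* - \tau'$ to $t^*$ and inserting (ii) twice --- first as $w(x, s - \tau_0(x)) \leq \rho(\tau) w(x, s)$ for $s \in [t^* - \tau', t^*]$, then as $w(x, s) \leq \rho(t^* - s) w(x, t^*)$ --- yields
\[
w(x, t^* - \tau') \;\leq\; w(x, t^*) \left[\, 1 + C_F \rho(\tau) \int_0^{\tau'} \rho(u)\, du \,\right].
\]
Hypothesis \eqref{condrho} converts this into a strict inequality with a uniform positive gap on the compact $\tau' \in [0, \tau]$, so by continuity (ii) extends (non-strictly) to $[t_0, t^* + \eta]$ for some small $\eta > 0$; choosing $\eta \leq \xi$ keeps the delayed times $s - \tau_0(x)$ in the region already controlled. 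Feeding this extended (ii) back into the DDE reduces it to $\partial_t w(x, t) \geq -C_F \rho(\tau) w(x, t)$ a.e., and a standard Gronwall estimate combined with $w(\cdot, t_0) \geq \delta$ from \eqref{eqini1} yields (i) on $[t_0, t^* + \eta]$. This contradicts the definition of $t^*$ and forces $t^* = T$.

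The main obstacle is the passage from the viscosity sub/supersolution properties of $u$ and $v$ to the pointwise DDE bound on $w$; I would handle it by exploiting the Lipschitz regularity of the solutions (which reduces the viscosity conditions to a.e.\ classical inequalities at points of differentiability) or by a standard doubling-of-variables argument at a candidate violation point. A secondary technicality is the edge case $t^* - \tau' < 0$ when $t_0$ is small (and the equation itself is not defined at negative times), which can be accommodated by splitting the integration at $0$ and exploiting the Lipschitz continuity of $u$ and $v$ together with the initial data \eqref{microinitial} on $[-2\tau, 0]$.
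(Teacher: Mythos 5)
Your proposal follows essentially the same route as the paper's proof: the same continuation quantity (your $t^*$ is the paper's $T^*$), the same delay differential inequality $\partial_t w \geq -C_F\, w(\cdot,\,t-\tau_0)$ obtained from the monotonicity and Lipschitz continuity of $F$ together with \eqref{hypborne}, the same integrated bound $w(x,t^*-\tau')\leq \bigl(1+C_F\rho(\tau)\int_0^{\tau'}\rho\bigr)\,w(x,t^*)$, and the same use of \eqref{condrho} to push the delayed-time control past $t^*$. The only differences are cosmetic: you carry the lower bound (i) inside the continuation set whereas the paper derives it from (ii) via an ODE comparison, and you phrase the contradiction as a continuity extension beyond $t^*$ rather than through violation points accumulating at $T^*$ (the paper's Cases 1 and 2 are exactly your two subcases $t-\tau'\leq t^*$ and $t-\tau'>t^*$).
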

\begin{remark}
The theorem is still valid when $R=+\infty$ (substituting $[x_0-R,x_0+R]$ by $\mathbb{R}$), \eqref{hypborne} is defined on $\emptyset$.
\end{remark}
%%
%The theorem is also valid if we choose a more restrictive property for $\rho$ (that implies condition \eqref{condrho}) but in this case, we have to impose $\tau<\frac{1}{4}$ :
%$$\rho(\tau)\tau ' <1-\frac{1}{\rho(\tau ') }  \hspace{1 cm} \tau ' \in [0,\tau].$$
%
\begin{remark}
All the proofs involved in this paper remain the same when dealing with stochastic reaction times provided those are uniformly bounded from above and from below. 
\end{remark}
The homogenization proof is different from the one performed in \cite{forca} on generalized Frenkel-Kontorova models, concerning an infinite system of particles that interact with a finite number of neighbours and that are subject to a periodic potential (this system takes the form of non-linear ODEs). The convergence proof in \cite{forca} relies on the construction of hull functions in a periodic setting. The case where particles interact with an infinite number of other particles is covered in \cite{disloc}. Other homogenization results in traffic flow modelling without considering the reaction time can be found in \cite{salaz1} when adding a junction condition in the microscopic model and in \cite{salaz2} for a second order microscopic model.

The second goal of the article is to provide a ``counter-example to homogenization'' in the following sense: we will display an explicit model for which $u^\varepsilon$ does not converge locally uniformly towards $u^0$ in the special case where all the drivers have a common reaction time larger than the threshold: 
%%%%%%%%%%%%%%%
\begin{theorem}[A counter-example to homogenization] \label{counterexample}$\,$\\
In the special case $\tau_0\equiv\tau$, there exist a Lipschitz continuous $F$, an initial data, and $\tau>\frac{1}{e C_F}$ such that the solution $u^\varepsilon$ to \eqref{exo} does not converge locally uniformly towards the solution of \eqref{exo2}.
\end{theorem}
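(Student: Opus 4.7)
The strategy is to exploit the instability, past a certain Hopf threshold $\tau_* > 1/(eC_F)$, of the uniform-spacing traveling-wave solution $\bar X_i(t) := V_0 t + i s_0$ (with $V_0 := F(s_0)$) of the microscopic DDE \eqref{microdelay}, and to show that this instability drives the rescaled solution $u^\varepsilon$ to a macroscopic profile whose time-drift differs from $V_0$. Linearizing \eqref{microdelay} around $\bar X$ gives $Y_i'(t) = C_F(Y_{i+1}(t-\tau) - Y_i(t-\tau))$; the Fourier ansatz $Y_i(t) = e^{\mu t + ik i}$ yields the dispersion $\mu = C_F(e^{ik}-1)e^{-\mu\tau}$, whose $k=\pi$ reduction $\mu = -2C_F e^{-\mu\tau}$ admits roots with $\Re\mu > 0$ as soon as $\tau > \pi/(4C_F) > 1/(eC_F)$.

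Concretely, I would choose $F$ Lipschitz, nondecreasing, bounded with $F(0)=0$, affine of slope exactly $C_F$ on a neighborhood $I_0$ of some $s_0>0$, and strictly concave (still $C_F$-Lipschitz) past $I_0$ before saturating at its $\pm\|F\|_\infty$ bounds. Fix the common reaction time $\tau_0\equiv\tau > \pi/(4C_F)$. The ``alternating-spacing'' ansatz $X_i(t) = V(t) + i s_0 + (-1)^i W(t)$ defines an invariant submanifold of the full (not merely linearized) microscopic DDE, on which the dynamics reduces to
\begin{align*}
V'(t) &= \tfrac12\bigl[F(s_0 - 2W(t-\tau)) + F(s_0 + 2W(t-\tau))\bigr], \\
W'(t) &= \tfrac12\bigl[F(s_0 - 2W(t-\tau)) - F(s_0 + 2W(t-\tau))\bigr].
\end{align*}
Standard Hopf-bifurcation analysis for scalar DDEs, combined with the boundedness of $F$, yields a stable periodic orbit $(V^*, W^*)$ of this reduced system; by strict concavity of $F$ and Jensen's inequality, the time-averaged drift $\overline{V^{*\prime}}$ over a period of $W^*$ is strictly less than $V_0 = F(s_0)$.

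I would then pick the initial datum as $u_0(x,s) := V_0 s + s_0 x + \phi(x, s)$, where $\phi$ is a small, globally Lipschitz perturbation (independent of $\varepsilon$) chosen so that the microscopic samples $X_i(s/\varepsilon) = u_0(i\varepsilon, s)/\varepsilon$ carry a non-trivial period-2 Fourier component at each $\varepsilon > 0$. The linear instability then amplifies this seed to $O(1)$ micro amplitude in micro time $T_\varepsilon = O(\log(1/\sigma_\varepsilon))$, where $\sigma_\varepsilon$ is the initial seed amplitude; for a Lipschitz-but-not-$C^1$ choice of $\phi$ one has $\sigma_\varepsilon \gtrsim \varepsilon^N$ for some $N$, so the corresponding macro time $\varepsilon T_\varepsilon = O(\varepsilon\log(1/\varepsilon)) \to 0$ vanishes. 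After $T_\varepsilon$ the trajectory is trapped on the stable periodic orbit, and for any fixed $(x^*, t^*)$ with $t^* > 0$ one obtains
\begin{align*}
u^\varepsilon(x^*, t^*) = \overline{V^{*\prime}}\, t^* + s_0 x^* + o_\varepsilon(1).
\end{align*}
The HJ solution of \eqref{exo2} with initial slope $s_0$ is $u^0(x^*, t^*) = V_0 t^* + s_0 x^*$; since $\overline{V^{*\prime}} \neq V_0$, $u^\varepsilon$ fails to converge locally uniformly to $u^0$.

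The main obstacle is the compatibility between the macroscopic scale at which $u_0$ is prescribed and the microscopic period-2 instability: $u_0$ must be a single Lipschitz function independent of $\varepsilon$, yet its discrete sampling must genuinely trigger the unstable mode. The resolution is that, thanks to exponential amplification at an $O(1)$ micro rate, even a very small period-2 Fourier seed suffices, and a generic Lipschitz perturbation $\phi$ (for instance one with an isolated kink) supplies it. A secondary task is to rigorously establish the stable limit cycle $(V^*, W^*)$ with $\overline{V^{*\prime}} < V_0$, which reduces to a finite-dimensional Hopf-bifurcation and Jensen argument made possible by the strict concavity of $F$ outside $I_0$.
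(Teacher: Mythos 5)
Your core insight is the same as the paper's: excite the period-two (alternating-spacing) mode, which lives on an invariant submanifold of \eqref{microdelay}, and observe that averaging a non-affine $F$ over the resulting persistent oscillation of the interdistances shifts the mean drift away from $F(s_0)$, contradicting the Hamilton--Jacobi solution $s_0x+F(s_0)t$. The paper does exactly this, with a convex quadratic $F$ near $L$ (so the shift is $+\beta A^2/2$) rather than your concave choice. But your route to a \emph{persistent, everywhere-present} oscillation has genuine gaps. First, the claim that ``standard Hopf-bifurcation analysis'' yields a stable periodic orbit $(V^*,W^*)$ to which the trajectory is ``trapped'' is not a proof: your reduced equation for $W$ is a Wright-type scalar DDE $W'(t)=-g(W(t-\tau))$, for which existence, stability and global attraction of a limit cycle beyond the critical delay are notoriously delicate; nothing in your setup computes the normal-form coefficient or controls whether the saturated amplitude even leaves the affine window $I_0$ (if it does not, concavity is never seen and the drift shift is zero). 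Second, and more fatally, the seeding mechanism fails. Since $X_i$ is driven only by $X_{i+1}$, an isolated kink in $\phi$ influences only the sites to one side of it, and its influence needs a microscopic time of order $|i|$ to produce an $O(1)$ effect at site $i$, i.e.\ a \emph{macroscopic} time of order $|x^*|$ at a fixed point $x^*$ --- not the $o_\varepsilon(1)$ you claim. A fixed, $\varepsilon$-independent Lipschitz $\phi$ cannot supply an $O(1)$ alternating seed at every lattice site for every $\varepsilon$, so the ``amplify a tiny seed in macro time $O(\varepsilon\log(1/\varepsilon))$'' step does not go through as stated.

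The paper sidesteps all of this by making the oscillation exact rather than emergent. It takes $F(x)=k+\beta(x-L)^2+\alpha(x-L)$ on $[0,2L]$, so that on the alternating manifold ($X_{i+2}-X_i\equiv 2L$) the deviation $\bar d_i=X_{i+1}-X_i-L$ satisfies the \emph{exactly linear} delay equation $\bar d_i'(t)=-2\alpha\,\bar d_i(t-\tau)$, and it chooses $\tau=\pi/(4\alpha)$ to sit precisely at the neutral (purely imaginary) root, so that $\bar d_i(t)=(-1)^{i+1}A\sin(2\alpha t)$ is a solution for all $t\ge 0$ once it is imposed on $[-\tau,0]$. The oscillation is prescribed at $O(1)$ microscopic amplitude $A$ at every site in the initial data $x_i^0(t)=iL+(-1)^i\frac{A}{2}\sin(2\alpha t)$, so no growth, no spreading and no limit-cycle analysis are needed; the macroscopic initial datum is still forced to be $u_0(x,0)=Lx$, and the time-averaged velocity $F(L)+\beta A^2/2$ is computed explicitly. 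If you want to salvage your approach, the honest fix is to abandon the small-seed/instability narrative and, like the paper, build the full-amplitude alternating oscillation directly into the microscopic initial condition at the neutral delay.
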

%%%%%%%%%%%%%
We derive such a counter-example by a small perturbation of a trivial but unstable case where the homogenization holds for all reaction times. This trivial case is a stationnary one where all the vehicles are initially equally spaced. This also highlights the fact that the initial dynamics is at least as important as the value of the reaction time from the homogenization perspective. 
%%%%%%%%%%
%%%faire homogénéisation cas sans retard? Seulement dans thèse...!!!!!!!!!!!!
%%%%%
%F \mbox{ is a positive, bounded (by $1$),1-Lipschitz and nondecreasing function on }\mathbb{R},
\subsection{Organisation of the article}
To get the homogenization result, we will first show the existence and uniqueness of solutions to \eqref{exo} and \eqref{exo2} in Section \ref{exun}. We will then prove the strict comparison principle for \eqref{exo} in Section \ref{proofcompa} and we will show that the restriction on $\tau$ in Theorem \ref{convergenceth} is equivalent to the existence of functions $\rho$ that verify \eqref{condrho}. We will also state and prove a direct corollary of Theorem \ref{compa} on drivers' positions and give a counter-example to Theorem \ref{compa} when the vehicles are not suitably spaced out at initial times, regardless of the threshold on $\tau$.  We will give the explicit convergence proof in Section \ref{proofconv} and give the unstable example where homogenization holds for all reaction times, provided the vehicles are perfectly spaced at initial times. In section \ref{counter}, we will give the explicit model and will prove Theorem \ref{counterexample}.
%%%%%
\section{Existence and uniqueness } \label{exun}
%%%%%%
This section is devoted to the study of the existence and uniqueness of solutions to \eqref{exo} and \eqref{exo2}.
Studying existence and uniqueness of solutions to \eqref{exo} is stricly equivalent to studying \eqref{exo3}.
%%%%
%%%%%%
\begin{proposition}
Under the assumptions \eqref{assumptions}, there exists a unique classical solution to \eqref{exo3}.\label{uniqeps1}
\end{proposition}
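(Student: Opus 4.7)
My plan is to apply the method of steps, the standard tool for delay differential equations. Because $\tau_0(x)\in [\xi,\tau]$ with $\xi>0$, the delay is uniformly bounded below: on any $t$-interval of length $\xi$, the right-hand side of \eqref{exo3} depends only on values of $u$ on the preceding interval of length $\xi$. I will show by induction on $k\geq 0$ that a classical solution exists and is unique on $\mathbb{R}\times[-2\tau,k\xi]$, which for $k$ large enough covers any given time horizon $T$.

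For the base step, on $[-2\tau,0]$ the function $u$ coincides with $u_0$ by the initial condition. Fix $x\in\mathbb{R}$ and $t\in[0,\xi]$; since $\tau_0(x)\geq\xi$ one has $t-\tau_0(x)\in[-\tau,0]$, so the right-hand side of \eqref{exo3} equals $F(u_0(x+1,t-\tau_0(x))-u_0(x,t-\tau_0(x)))$, which is a continuous, bounded function of $t$ thanks to the Lipschitz continuity of $u_0$ and of $F$. Then $u(x,\cdot)$ satisfies a first-order ODE with known continuous forcing, whose unique classical solution is
$$u(x,t)=u_0(x,0)+\int_0^t F\bigl(u_0(x+1,s-\tau_0(x))-u_0(x,s-\tau_0(x))\bigr)\,ds.$$
This determines $u$ uniquely on $\mathbb{R}\times[-2\tau,\xi]$, with $\partial_t u(x,t)$ existing and satisfying \eqref{exo3} pointwise.

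For the inductive step, assume a unique classical solution has been constructed on $\mathbb{R}\times[-2\tau,k\xi]$. For $t\in[k\xi,(k+1)\xi]$ the delay gives $t-\tau_0(x)\leq(k+1)\xi-\xi=k\xi$, so the right-hand side of \eqref{exo3} is again a known continuous function of $t$ (at fixed $x$). The same integration formula, started from $t=k\xi$, extends $u$ uniquely to $[k\xi,(k+1)\xi]$; continuity at the endpoint $t=k\xi$ of both $u$ and $\partial_t u$ is automatic from the integral representation. Iterating the procedure covers the full time interval $[-2\tau,T)$, and since the construction is uniquely forced at each stage, uniqueness in the classical sense follows.

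There is no real obstacle here; the only points that deserve a check are the verification that at each stage the extended function still satisfies the equation at the junction $t=k\xi$ (a direct consequence of the integral representation) and the fact that the construction never breaks down in finite time, which follows from the global boundedness of $F$: this forces $|\partial_t u|\leq\|F\|_\infty$ so no blow-up is possible.
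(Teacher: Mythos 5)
Your proof is correct and follows essentially the same route as the paper: the method of steps, integrating \eqref{exo3} explicitly on successive intervals of length $\xi$ where the right-hand side is a known continuous function determined by the previous interval, with uniqueness forced at each stage. The paper's proof is the same incremental construction, including the same check that the left and right time-derivatives agree at each junction $t=k\xi$.
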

\begin{proof}[Proof of Proposition 2.1] 
\textcolor{white}{.} \\
The proof is based on an explicit and incremental construction. Indeed, on the time interval $(0;\xi]$ the equation can be written as follows:
$$\partial_t u (x,t) =F({u_0 (x+1,t-\tau_0(x))-u_0 (x,t-\tau_0(x))}) \hspace{2 cm} (x,t)\in \mathbb{R}\times (0,\xi].$$
%$$\partial_t u (x,t) =F({u_0 (x+1,t-\tau)-u_0 (x,t- \tau)}) \hspace{1 cm}\forall (x,t)\in \mathbb{R}\times (0,\tau]$$
Hence, it can be explicitly integrated and we get: 
$$ u(x,t)=u_0(x,0)+\int_0^t F({u_0 (x+1,s-\tau_0(x))-u_0 (x,s- \tau_0(x))}) ds \hspace{1 cm} \hspace{2 cm} (x,t)\in \mathbb{R}\times (0,\xi].$$
%$$ u(x,t)=u_0(x)+\int_0^t F({u_0 (x+1,s-\tau)-u_0 (x,s- \tau)}) ds \hspace{1 cm}\forall (x,t)\in \mathbb{R}\times [0,\tau]$$
%%%%%
This enables to define the function $u_1$ which coincides with $u_0$ at initial times and that is defined by the previous expression in $[0,\xi]$.

Hence, in the time interval $(\xi;2\xi]$, the equation can be written as follows:
%%%%
$$\partial_t u (x,t) =F({u_1 (x+1,t-\tau_0(x))-u_1 (x,t- \tau_0(x))}) \hspace{0.7 cm} (x,t)\in \mathbb{R}\times (\xi;2\xi].$$
%%%%%%
Here again, it can be explicitly integrated: 
$$ u(x,t)=u_1(x,\xi)+\int_\xi^t F({u_1 (x+1,s-\tau_0(x))-u_1 (x,s- \tau_0(x))}) ds \hspace{1 cm} (x,t)\in \mathbb{R}\times [\xi,2\xi].$$
The process can be iterated to obtain a function $u$ defined piecewise which is, by construction, continuous on $\mathbb{R}\times [0,\infty)$ and which solves equation \eqref{exo3} in each $\mathbb{R}\times(k\xi,(k+1)\xi)$, $k= 0,.., \lfloor \frac{T}{\xi}\rfloor -1$. It remains to prove that it solves the equation globally and so, that $u$ is $C^1$ in time on $(0,T)$. Indeed, at each $k\xi$, the left and right limits of the derivative coincide and verify the equation.

The uniqueness also comes from the process as we see that on each time interval, the solution is completely determined by its expression on the previous interval which implies global uniqueness for a given initial condition $u_0$.
\end{proof}

Equation \eqref{exo2} is an Hamilton-Jacobi equation and is therefore studied in the viscosity solutions' framework. The existence and uniqueness are classical and are respectively given by Perron's method and the usual comparison principle, see \cite{barles} for instance.  
\section{Strict comparison principle} \label{proofcompa}
%For $(x,R)\in\mathbb{R}\times \mathbb{R}_+$, we set $A^R_x:=[x-R,x+R]$ and when $R=+\infty$, $A^{\infty}_x:=\mathbb{R}$ (which is actually independent from $x$).
This section is first devoted to the proof of Theorem \ref{compa}. We will then explain the restriction on $\tau$. We will also prove the conservation of initial order of the vehicles as a corollary when those are well spaced out at initial times. Finally, we will finally present a counter-example to comparison principle for any reaction time $\tau>0$. Indeed, the existence of $\rho$ functions that verify \eqref{condrho} is equivalent to consider $\tau$ under the threshold but both \eqref{condrho} and \eqref{eqini1} are necessary for the strict comparison principle to hold and hence, the restriction on $\tau$ is not a sufficient condition to ensure the result. The existence of $\rho$ functions is not enough, it has to be linked with the initial dynamics of the vehicles. Namely, we will show that if the vehicles are not suitably spaced out at initial times (in the sense made precise in \eqref{eqini1}), their initial order can be disturbed even for reaction times that are below the threshold. 
\subsection{Proof of Theorem \ref{compa}}
\begin{proof}[Proof of Theorem 1.2]
\textcolor{white}{.} \\
Let us consider $d:=v-u$ and define $T^*$ as: $$T^*=\sup \{S\in [0,+\infty)/\forall \tau'\in[0,\tau], \forall  (x,t) \in [x_0-R,x_0+R] \times [t_0-\tau,S], \hspace{0.2 cm} d(x,t-\tau')\leq \rho (\tau') d(x,t)  \}.$$ 
The set is not empty as it contains $t_0$ so $T^*$ is well-defined. 

%This set is closed (by considering sequences and passages to the limit thanks to the continuity of $d$ so $T^*$ belongs to the set).
%
%PB!!!!! CEST FAUX SI $T^*$ EST EGAL A L'INFINI (CECI EST POSSIBLE QUAND T est INFINI). EN PLUS Y A PLEIN DE MOMENTS OU FAUT EXCLURE l'INFINI!!!
\begin{itemize}
\item We first claim that to establish \eqref{finalres}, it is sufficient to prove that $T^* \geq T$.
\end{itemize}
%%%%%%%%
Thanks to \eqref{condrho} we have $\rho(0)>1$. Taking $\tau '=0$ in the definition of $T^*$, with $T^*\geq T$, implies that:
\begin{align}
d(x,t)\geq 0 \hspace{1cm}(x,t)\in [x_0-R,x_0+R]\times [t_0-\tau,T). \label{posi}
\end{align}
By combining with \eqref{hypborne}, we get: 
\begin{align}
d(x,t)\geq 0 \hspace{1cm}(x,t)\in [x_0-R-1,x_0+R+1]\times [t_0-\tau,T). \label{posi2}
\end{align}
%
%Otherwise, when $R=+\infty$ as $0\leq F' \leq 1$, \eqref{posi} implies $F'(\Delta(x,t))d(x+1,t-\tau)\geq 0$. When $R\neq +\infty$, \eqref{posi} and \eqref{hypborne} implies the same result. Thus in both cases:
Let us consider $(x,t)\in [x_0-R,x_0+R]\times (t_0,T)$. By definition of $d$, we have: 
\begin{align}
\partial_t d(x,t) \geq F({v (x+1,t-\tau_0(x))-v (x,t- \tau_0(x))})- F({u (x+1,t-\tau_0(x))-u (x,t- \tau_0(x))}). \label{lipdessous}
\end{align}
By combining \eqref{posi2} with the fact that $F$ is non-decreasing gives: 
\begin{align*}
\partial_t d(x,t) \geq F({u (x+1,t-\tau_0(x))-v (x,t- \tau_0(x))})- F({u (x+1,t-\tau_0(x))-u (x,t- \tau_0(x))}).
\end{align*}
Using the fact that $F$ is $C_F-$Lipschitz, we get:  
\begin{align*}
\partial_t d(x,t) \geq -C_F d(x,t-\tau_0(x)).
\end{align*}
If $T^* \geq T$, then we have:
\begin{align*}
\partial_t d(x,t) \geq -C_F\rho(\tau_0(x)) d(x,t).
\end{align*}
By using the fact that $\rho$ is non-decreasing, we get: 
\begin{align}
\partial_t d(x,t) \geq -C_F\rho(\tau) d(x,t). \label{decroi}
\end{align}
Hence, $d$ is a supersolution of the problem: 
\begin{align}
\left\{
\begin{array}{l}
  \partial_t w(x,t)=-C_F \rho(\tau)w(x,t), \\
  w(x,t_0)=\delta. \label{sys2}
\end{array}
\right.
\end{align}
We conclude \eqref{finalres} by a standard comparison principle for this ODE. 

\begin{itemize}
\item Let us now show that $T^*\geq T$. 
\end{itemize}
By contradiction, if we suppose that $T^* < T$, then for all $\beta\in \left(0,\inf\left(\frac{T-T^*}{2},1 \right) \right)$, there exists $\tau_\beta\in [0,\tau]$, $(x_\beta, t_\beta)\in [x_0-R,x_0+R]\times (T^*,T^*+\beta]$ such that: 
\begin{align}
d(x_\beta, t_\beta-\tau_\beta)> \rho (\tau_\beta) d(x_\beta, t_\beta). \label{absurde}
\end{align}
Let us notice that \eqref{decroi} holds true for all $t\in [t_0,T^*]$ (and so does \eqref{finalres}). 

As a preliminary result, let us first show that: 
\begin{align}
d(x,t-\tau')\leq \bar{\rho}(\tau')d(x,t) \hspace{1 cm} \tau ' \in [0,\tau],  (x,t) \in [x_0-R,x_0+R] \times [t_0,T^*], \label{controstrict}
\end{align}
with $\bar{\rho}$ given by: $$\bar{\rho}( \tau')= 1+C_F \rho(\tau)\int_0^{\tau '}\rho(s)ds. $$ By remarking that $\bar{\rho}(0)=1$, we notice the equality for $\tau'=0$. For $\tau'\in [0,\tau], (x,t)\in [x_0-R,x_0+R] \times [t_0,T^*]$, we have: 
$$ \partial_{\tau'}(\bar{\rho}(\tau')d(x,t))=C_F\rho(\tau)\rho(\tau')d(x,t).$$ 
Let us remark that $\tau' \mapsto d(x,t-\tau')$ is a subsolution of this equation. Indeed:
\begin{align}
\partial_{\tau'} \left(d(x,t-\tau')\right)= -\partial_t d(x,t-\tau').
\end{align}
We can use \eqref{decroi} as $t-\tau'\in [t_0-\tau,T^*]$: 
\begin{align}
\partial_{\tau'} \left(d(x,t-\tau')\right) \leq C_F \rho(\tau) d(x,t-\tau ').
\end{align}
%%%%%%%ATTENTION CI DESSUS ON A VRAIMENT UTILISE \eqref{decroi}, on n'a pas de SHIFT EN TEMPS ENTRE LE MEMBRE DE GAUCHE ET CELUI DE DROITE, C'EST A un temps EGAL A t-\tau'!!!! C'est maintenant qu'on utilise la definition de T^* pour le shift en temps entre le côté gauche et le côté droit!!
Finally, by definition of $T^*$, we get: 
\begin{align}
\partial_{\tau'} \left(d(x,t-\tau')\right) \leq C_F \rho(\tau) \rho(\tau')d(x,t).
\end{align}
The preliminary result is then obtained by a comparison principle on the equation $\partial_{\tau'} W=C_F\rho(\tau)\rho(\tau')W$.
%Let us notice that \eqref{controstrict} implies that: 
%
%\begin{align}
%d(x,t-\tau')<{\rho}(\tau')d(x,t) \hspace{1 cm} \forall (x,\tau ',t) \in \mathbb{R}\times [0,\tau] \times [0,T^*] \label{controstrict2}
%\end{align}

Let us notice that \eqref{assumptions} and \eqref{lipdessous} imply that: 
\begin{align}
\partial_t d(x,t)\geq -2 ||F||_\infty.
\end{align}
By integrating it, we get: 
\begin{align}
d(x,t)\geq d(x,s)-2||F||_\infty(t-s) \hspace{1cm}  (x,t) \in [x_0-R,x_0+R]\times [t_0,T],  s\in [t_0,t].
\label{lipdessous2}
\end{align}
\textbf{Case 1: $t_\beta-\tau_\beta\leq T^*$}.

In this case, by defining $\tau'_\beta:=T^*-t_\beta+\tau_\beta\in [0,\tau]$ we get: $$d(x_\beta, t_\beta-\tau_\beta)=d(x_\beta, T^*-\tau'_\beta).$$
Hence, by \eqref{controstrict} we get: 
$$d(x_\beta, t_\beta-\tau_\beta)\leq \bar{\rho}(\tau'_\beta)d(x_\beta,T^*). $$
Thanks to \eqref{lipdessous2}, we have: 
\begin{align}
d(x_\beta, t_\beta-\tau_\beta)\leq \bar{\rho}(\tau'_\beta)(d(x_\beta,t_\beta)+2||F||_\infty \beta). \label{intermediaire}
\end{align}
In order to get a contradiction with \eqref{absurde}, it is sufficient to consider $\beta<\min (\beta_1,\beta_2)$ with:
\begin{align}
\left\{
\begin{array}{l}
  \beta_1:=\frac{\delta}{2||F||_\infty+1} e^{-C_F\rho(\tau)(T^*-t_0)}>0, \\
  \beta_2:= \frac{\beta_1}{2||F||_\infty} \inf_{[0,\tau]} (\frac{\rho}{\bar{\rho}}-1)>0.
\end{array}
\right.
\end{align}

Indeed, for $\beta<\beta_1$, thanks to \eqref{finalres} (which is still valid in $[x_0-R,x_0+R]\times [t_0,T^*]$) and to \eqref{lipdessous2}, we have:
\begin{align*}
d(x_\beta, t_\beta)\geq d(x_\beta,T^*)-2||F||_\infty\beta\geq \delta e^{-C_F\rho(\tau)(T^*-t_0)}-2||F||_\infty\beta_1=\beta_1,   
\end{align*}
and for $\beta<\min (\beta_1,\beta_2)$ we have: 
\begin{align*}
\bar{\rho}(\tau'_\beta)(d(x_\beta,t_\beta)+2||F||_\infty\beta)\leq \rho (\tau'_\beta) d(x_\beta,t_\beta).
\end{align*}
Together with \eqref{intermediaire}, this implies: 
$$d(x_\beta, t_\beta-\tau_\beta)\leq \rho (\tau'_\beta) d(x_\beta,t_\beta). $$
Finally, as $\rho$ is non decreasing and $\tau'_\beta\leq \tau_\beta$, we get a contradiction with \eqref{absurde}.

\textbf{Case 2: $t_\beta-\tau_\beta> T^*$}.

As we have $t_\beta<T^*+\beta$, this implies that $\tau_\beta\in[0,\beta]$.

Then, thanks to \eqref{lipdessous2} and \eqref{absurde} we have:
\begin{align*}
d(x_\beta,t_\beta)+2||F||_\infty\beta\geq d(x_\beta,t_\beta)+ 2||F||_\infty\tau_\beta\geq d(x_\beta, t_\beta-\tau_\beta)>\rho (\tau_\beta) d(x_\beta,t_\beta).
\end{align*}
In particular, we have: 
\begin{align*}
(\rho(\tau_\beta)-1)d(x_\beta,t_\beta)<2||F||_\infty\beta.
\end{align*}
Thanks to \eqref{lipdessous2}, this implies: 
\begin{align*}
(\rho(\tau_\beta)-1)d(x_\beta,T^*)<2||F||_\infty\rho(\tau_\beta)\beta.
\end{align*}
Thus,
\begin{align}
0< \delta e^{-C_F\rho(\tau)(T^*-t_0)} <\frac{\rho(\tau_\beta)}{\rho(\tau_\beta)-1}2||F||_\infty\beta.
\end{align}
For vanishing $\beta$ this implies $\delta e^{-C_F\rho(\tau)(T^*-t_0)}=0 $ which is false. 
\end{proof}
%%%%%
\subsection{Restriction on $\tau$ and existence of solutions to \eqref{condrho}}
Condition \eqref{condrho} explains why we chose $\tau<\frac{1}{eC_F}$. In fact, we have the following proposition: 
\begin{proposition}\label{A}
\eqref{condrho} admits solutions if and only if $\tau<\frac{1}{eC_F}$. 
\end{proposition}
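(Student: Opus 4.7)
My plan is to treat the two implications separately, the sufficient direction by constructing an explicit exponential $\rho$, and the necessary direction by a Gronwall-type comparison applied to the integral condition itself.

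For sufficiency, I would try the ansatz $\rho(s)=a\,e^{e C_F s}$ on $[0,\tau]$ (extended by constants outside this interval to keep $\rho$ positive and non-decreasing on $\mathbb{R}$). With this shape one has $\rho(\tau)=a\,e^{eC_F\tau}$, and picking $a=e^{1-eC_F\tau}$ forces $\rho(\tau)=e$. A direct computation then gives
\begin{equation*}
1+C_F\rho(\tau)\int_0^{\tau'}\rho(s)\,ds \;=\; 1+eC_F\cdot a\cdot\frac{e^{eC_F\tau'}-1}{eC_F} \;=\; \rho(\tau')+(1-a),
\end{equation*}
so \eqref{condrho} reduces to $a>1$, i.e.\ $e^{1-eC_F\tau}>1$, which is precisely $\tau<\frac{1}{eC_F}$. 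So this single ansatz handles the whole sufficient regime.

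For necessity, assume $\rho$ satisfies \eqref{condrho} and set $y(\tau'):=1+C_F\rho(\tau)\int_0^{\tau'}\rho(s)\,ds$, so that $y(0)=1$ and $\rho(\tau')>y(\tau')$ for all $\tau'\in[0,\tau]$. Since $y'(\tau')=C_F\rho(\tau)\rho(\tau')>C_F\rho(\tau)y(\tau')$, Gronwall yields $y(\tau')>e^{C_F\rho(\tau)\tau'}$ on $(0,\tau]$. Evaluating at $\tau'=\tau$ and using $y(\tau)<\rho(\tau)$ gives $\rho(\tau)>e^{C_F\rho(\tau)\tau}$, i.e.\ $\ln\rho(\tau)>C_F\rho(\tau)\tau$. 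Writing $b=\rho(\tau)$, we get $\tau<\frac{\ln b}{C_F b}$; since $b\mapsto \frac{\ln b}{b}$ attains its maximum $\frac{1}{e}$ at $b=e$, this forces $\tau<\frac{1}{eC_F}$.

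I expect the construction step to be the only part requiring a bit of inspired guessing, and once the ansatz is in hand the verification is a line. The converse is really just the observation that the admissible $\rho$ must dominate the flow of the autonomous ODE $y'=C_F\rho(\tau)y$, which precisely encodes why the threshold $\frac{1}{eC_F}$ is the extremum of $\frac{\ln b}{b}/C_F$. No real obstacle is anticipated beyond choosing the right exponential rate (namely $eC_F$) so that the optimal value $b=e$ is realized.
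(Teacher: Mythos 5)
Your proof is correct. The necessity half is essentially the paper's argument: both of you reduce \eqref{condrho} to the statement that $y(\tau')=1+C_F\rho(\tau)\int_0^{\tau'}\rho(s)\,ds$ (the paper works with $R(\tau')=\int_0^{\tau'}\rho$, which is the same thing up to an affine change) strictly dominates the flow of the linear ODE $y'=C_F\rho(\tau)y$, deduce $\rho(\tau)>e^{C_F\rho(\tau)\tau}$, and conclude from $\max_{b>0}\frac{\ln b}{b}=\frac{1}{e}$. The sufficiency half is where you genuinely depart from the paper. The paper builds a one-parameter family $U_\lambda$ with $\lambda\in\left(1,\sqrt{\frac{1}{e\tau C_F}}\right)$, which forces it to solve the transcendental fixed-point equation $\gamma_0=\lambda e^{\lambda\gamma_0 C_F\tau}$ via the auxiliary function $h(\gamma)=\ln\gamma-\lambda\gamma\tau C_F$. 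You instead pin the exponential rate to exactly $eC_F$ and normalize so that $\rho(\tau)=e$; the payoff is that both sides of \eqref{condrho} then have the same $\tau'$-derivative ($C_F\rho(\tau)\rho(\tau')$), so the inequality collapses to the single constant condition $a=e^{1-eC_F\tau}>1$, which is exactly $\tau<\frac{1}{eC_F}$. This is a shorter and more transparent verification, at the (irrelevant for the proposition) cost of producing one solution rather than the paper's infinite family. Your extension of $\rho$ by constants off $[0,\tau]$ is the right way to meet the requirement that $\rho$ be positive and non-decreasing on all of $\mathbb{R}$, and your choice $\rho(\tau)=e$ is consistent with the extremal role of $b=e$ in the necessity argument, which explains why the single ansatz saturates the whole subcritical regime.
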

We now give a proof of Proposition \ref{A}.
\begin{proof}[Proof of Proposition 3.1]
%\textcolor{white}{.} \\
We want to show that condition \eqref{condrho} implies $\tau<\frac{1}{eC_F}$.

Let us define: $$ R(\tau ')=\int_0^{\tau '} \rho(s)ds. $$
We have $R(0)=0$. By defining $\alpha=R'(\tau)=\rho(\tau)$, condition \eqref{condrho} is equivalent for $R$ to be a strict supersolution of: 
\begin{align}
\left\{
\begin{array}{l}
  y'=C_F\alpha y+1,  \mbox {   in } [0,\tau], \\
  y(0)=0. \label{sys}
\end{array}
\right.
\end{align}
As $S: \tau ' \mapsto \frac{e^{C_F\alpha\tau '}-1}{C_F\alpha}$ is a solution of this Cauchy problem on $[0,\tau]$, by Gronwall's lemma or a standard comparison principle, we deduce that $R\geq S$ or equivalently that: $$1+C_F R(\tau ')R'(\tau)\geq e^{C_F R'(\tau)\tau '} \hspace{1 cm} \tau ' \in [0,\tau]. $$

Moreover, as $R$ is a strict supersolution of \eqref{sys}, we deduce that: $R'(\tau ') > e^{C_F R'(\tau)\tau '}$ for all $\tau ' \in [0,\tau]$. By considering this inequality for $\tau '= \tau$ and taking the logarithm, we get: $$C_F \tau < \frac{ \ln R'(\tau)}{R'(\tau)}\leq \max_{x>0} \frac{\ln x}{x}=\frac{1}{e}. $$

Let us now show that \eqref{condrho} admits solutions as soon as $\tau<\frac{1}{e C_F}$.

It is equivalent to find positive and non-decreasing solutions $R$ of:
\begin{align}
\left\{
\begin{array}{l}
  R'(\tau')>C_F R'(\tau)R(\tau')+1 \hspace{2cm} \tau'\in[0,\tau], \\
  R(0)=0. \label{sys2}
\end{array}
\right.
\end{align}
Let us consider $\lambda\in\left(1,\sqrt{\frac{1}{e\tau C_F}}\right)$. 
%$$\lambda:=\frac{1+\sqrt{\frac{1}{e\tau}}}{2}\in\left(1,\sqrt{\frac{1}{e\tau}}\right). $$

The function $h:\gamma \mapsto \ln \gamma - \lambda \gamma \tau C_F$ is smooth on $(0,+\infty)$ and admits a maximum at $\gamma_{max}=\frac{1}{\lambda\tau C_F}$. As $\lambda<\sqrt{\frac{1}{e\tau C_F}}$, it is straightforward to check that $\ln \lambda < h(\gamma_{max})$ and so, there exists $\gamma_0>0$ such that $h(\gamma_0)=\ln \lambda$ or equivalently that $\gamma_0=\lambda e^{\lambda\gamma_0 C_F \tau}$.

Let us define $U_\lambda: \tau' \mapsto \frac{1}{\gamma_0 C_F} (e^{\lambda\gamma_0 C_F \tau'}-1).$
We have: $U_\lambda'(\tau)=\lambda e^{\lambda\gamma_0 C_F \tau}=\gamma_0$.
As $\lambda>1$, $U_\lambda$ verifies:
\begin{align*}
\left\{
\begin{array}{l}
  U_\lambda '(\tau')=\lambda \gamma_0 C_F U_\lambda(\tau')+\lambda>\gamma_0 C_F U_\lambda(\tau')+1=C_F U_\lambda'(\tau) U_\lambda(\tau')+1\hspace{2cm} \tau'\in[0,\tau], \\
  U_\lambda(0)=0.
\end{array}
\right.
\end{align*}
For all $\lambda\in\left(1,\sqrt{\frac{1}{e\tau C_F}}\right)$, we can construct a solution $U_\lambda$ to \eqref{sys2}. Therefore, as soon as $\tau<\frac{1}{e C_F}$, there exist infinitely many solutions. 
\end{proof}
%%%%%%%%%%%
\begin{remark}
Condition \eqref{condrho} is reduced to: $$\rho C_F \tau ' <1-\frac{1}{\rho },$$ if we consider $\rho$ as a constant. If it is considered for $\tau'=\tau$, we get $ \tau<\frac{1}{4C_F}$ because the condition tells that the function $x\mapsto C_F \tau x^2-x+1$ has a negative part. Conversely, for all fixed $\tau<\frac{1}{4 C_F}$, we can always find constant solutions. 
\end{remark}
%%%%%%%%%%%%%%%%%%%%%%%%
\subsection{Consequence on drivers' positions and a counter-example}
A very practical consequence of this strict comparison principle for the microscopic model derived from traffic flow is the conservation of initial order for vehicles, provided they are suitably spaced out at initial times. 
\begin{corollary} [Conservation of initial order]
Let us consider $(X_i)_{i\in \mathbb{Z}}$ a sequence of drivers' positions that evolve under the dynamics \eqref{microdelay} with initial conditions given by \eqref{microinitial} and for $\tau_0\equiv\tau$. We consider the solution $u$ to \eqref{exo3} such that: $$u_0(i,t)=x_i^0(t) \hspace{0.5cm} (i,t)\in \mathbb{Z}\times [-\tau,0].$$ We suppose that there exist $\sigma>0$, a time function $\rho_{p}$ which verifies \eqref{condrho} such that:
\begin{align}
\sigma \leq u_0(x+1,t-\tau')-u_0(x,t-\tau') \leq \rho_{p}(\tau') (u_0(x+1,t)-u_0(x,t)),\mbox{   } \tau'\in[0,\tau], (x,t)\in \mathbb{R}\times [-\tau,0]. \label{goodhyp}
\end{align}
Then, 
$$X_{i+1}(t) > X_{i}(t)    \hspace{0.5cm} (i,t)\in \mathbb{Z}\times [0,T). $$
\end{corollary}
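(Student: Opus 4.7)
The plan is to deduce the corollary as a direct consequence of the strict comparison principle applied to two particular solutions of \eqref{exo3}.

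First I would exploit the translation invariance of \eqref{exo3} in the case $\tau_0\equiv\tau$: the equation reads $\partial_t u(x,t)=F(u(x+1,t-\tau)-u(x,t-\tau))$, which is manifestly invariant under the shift $x\mapsto x+1$. Consequently, if $u$ is the classical solution provided by Proposition \ref{uniqeps1}, then $v(x,t):=u(x+1,t)$ is again a classical solution, and in particular simultaneously a super- and a subsolution of \eqref{exo3}. The difference $d:=v-u$ therefore satisfies $d(x,t)=u(x+1,t)-u(x,t)$, and at integer $x=i$ we will read off $u(i+1,t)-u(i,t)$.

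Next I would apply Theorem \ref{compa} to this pair with $x_0=0$, $R=+\infty$ (so that the boundary hypothesis \eqref{hypborne} is vacuous by the remark following the theorem), $t_0=0$, $\delta=\sigma$, and $\rho=\rho_p$. Condition \eqref{condrho} holds by assumption on $\rho_p$. For \eqref{eqini1}, note that on $[-\tau,0]$ the solution $u$ coincides with its initial datum $u_0$, so
\begin{equation*}
(v-u)(x,t-\tau')=u_0(x+1,t-\tau')-u_0(x,t-\tau'),\qquad (x,t)\in\mathbb{R}\times[-\tau,0],\ \tau'\in[0,\tau],
\end{equation*}
and \eqref{goodhyp} rewrites exactly as \eqref{eqini1} with $\delta=\sigma$ and $\rho=\rho_p$. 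Theorem \ref{compa} then yields
\begin{equation*}
u(x+1,t)-u(x,t)\geq \sigma e^{-C_F\rho_p(\tau)t}>0\qquad \text{for all }(x,t)\in\mathbb{R}\times[0,T).
\end{equation*}

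Finally I would identify $u$ at integer points with the microscopic trajectories. Evaluating \eqref{exo3} at $x=i\in\mathbb{Z}$ (still using $\tau_0\equiv\tau$) shows that $(u(i,\cdot))_{i\in\mathbb{Z}}$ satisfies the delay system \eqref{microdelay}, with initial conditions $u(i,t)=u_0(i,t)=x_i^0(t)$ on $[-\tau,0]$. By the uniqueness part of Proposition \ref{uniqeps1} (applied to the integer-indexed restriction, or equivalently by the step-by-step uniqueness argument of that proposition), $u(i,t)=X_i(t)$ for every $(i,t)\in\mathbb{Z}\times[0,T)$. The strict positivity derived above, specialized to $x=i$, then gives $X_{i+1}(t)-X_i(t)\geq \sigma e^{-C_F\rho_p(\tau)t}>0$, which is the claim.

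The proof is essentially a bookkeeping exercise; the only subtle point is the verification that the hypotheses of Theorem \ref{compa} hold globally in $x$ with $R=+\infty$, which is why \eqref{goodhyp} is posed on all of $\mathbb{R}\times[-\tau,0]$ and why the translation invariance (requiring $\tau_0$ to be constant) is crucial — if $\tau_0$ depended on $x$, the shifted function $v(x,t)=u(x+1,t)$ would no longer solve the same equation and the argument would break down.
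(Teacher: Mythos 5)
Your proof is correct and follows essentially the same route as the paper's: shift $u$ by one unit in space to obtain $v$, apply Theorem \ref{compa} with $R=+\infty$, $t_0=0$, $\delta=\sigma$, $\rho=\rho_p$, and identify $u(i,\cdot)$ with $X_i$ by uniqueness. You merely spell out more explicitly why the shifted function solves the same equation (translation invariance, which requires $\tau_0$ constant) and why \eqref{goodhyp} gives \eqref{eqini1}, points the paper leaves implicit.
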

%%%%
\begin{proof}[Proof of Corollary 3.3]
\textcolor{white}{.} \\
%Let us define $u(x,t)=X_{\lfloor {x} \rfloor}(t)$, $v(x,t)=u(x+1,t)$ and $d:=v-u$. EN FAIT CECI EST FAUX!!!! LE PSEUDO CONTINUUM C'ETAIT JUSTE UN TRUC FORMEL POUR FAIRE LE LIEN MAIS EN REALITE ON N'A PAS DE PARTIE ENTIERE CAR u est LIPSCHITZ EN ESPACE DONC CONTINUE...
%%%
By uniqueness, we have:
\begin{align}
X_i(t)=u(i,t) \hspace{0.5cm} (i,t)\in \mathbb{Z}\times [0,T). \label{edoedp}
\end{align}
%Indeed, let us define: $$Z_i(t):=u(i,t) \hspace{0.5cm} (i,t)\in \mathbb{Z}\times [-\tau,T). $$
%Firstly, at initial times we have: $$Z_i(t)=u(i,t)=u_0(i,t)=x_i^0(t)\hspace{0.5 cm} (i,t)\in \mathbb{Z}\times [-\tau,0]. $$
%Afterwards, we have: 
%$$ \frac{dZ_i}{dt}(t)=\partial_t u(i,t)=F(u(i+1,t-\tau)-u(i,t-\tau))=F(Z_{i+1}(t-\tau)-Z_{i}(t-\tau))  \hspace{0.5 cm} (i,t)\in \mathbb{Z}\times (0,T).$$
%From Proposition \ref{uniqeps1}, we get: 
%$$Z_i(t)=X_i(t) \hspace{0.5cm} (i,t)\in \mathbb{Z}\times [0,T). $$
We consider $v:(x,t)\mapsto u(x+1,t)$. In particular, we have: 
\begin{align}
X_{i+1}(t)=v(i,t) \hspace{0.5cm} (i,t)\in \mathbb{Z}\times [0,T). \label{edoedp2}
\end{align}
For $d:=v-u$, thanks to \eqref{goodhyp}, the conditions of Theorem \ref{compa} (and its remark) are fulfilled with $\delta = \sigma$, $t_0=0$, $R=+\infty$ and $\rho=\rho_{p}$. Therefore we have $v>u$ on $\mathbb{R}\times [0,T)$ and in particular on $\mathbb{Z}\times [0,T)$. Using \eqref{edoedp} and \eqref{edoedp2} this is equivalent to have: 
$$X_{i+1}(t) > X_{i}(t)    \hspace{2cm} (i,t)\in \mathbb{Z}\times [0,T). $$
\end{proof}
%%%%%%%
\begin{remark}
Our microscopic model is valid when the drivers always stay in the same order, otherwise it is not adapted anymore to the physical situation. More explicitly, the behaviour of the driver at position $X_i$ does not depend anymore on the position $X_{i+1}$ if at some time the car $i+1$ is not in front of the car $i$.
\end{remark}
In contrast with most Hamilton-Jacobi equations, it is not possible to state a classical comparison principle because of the delay time. More accurately, it is compulsory to ensure that the vehicles are suitably spaced out (this space is represented by the function $\rho$).

\begin{example} \label{nonconserv}
We consider the case: $\tau_0\equiv\tau$. For any delay time $\tau>0$, it is straightforward to show that the initial order of solutions is sometimes not conserved when the vehicles are not suitably spaced in the sense made precise in Theorem \ref{compa}. Let us consider $n_0\in\mathbb{N}^*$ such that $\tau>\frac{2}{n_0}$. Let us consider the particular case for $F$ being identically equal to the identity on $[0;1]$ (and that verifies \eqref{assumptions} in $\mathbb{R}$). Let us consider two sets of drivers $(X_i)_{i\in \mathbb{Z}}$ and $(Y_i)_{i\in \mathbb{Z}}$ that evolve under the same dynamics \eqref{microdelay} and such that: $$y_i^0(t)< x_i^0(t) \hspace{2cm}  (i,t)\in \mathbb{Z}\times [-\tau,0], $$ and there exists $j\in\mathbb{Z}$ such that for $t\in [-\tau,0]$: 
\begin{align}
\left\{
\begin{array}{l}
 y_j^0(t)=j-1+\frac{n_0}{n_0+1} e^{n_0 t}< x_j^0(t)=j,  \\
 y_{j+1}^0(t)=j+\frac{n_0}{n_0+1}e^{n_0 t}, \\
 x_{j+1}^0(t)=y_{j+1}^0(t) +\frac{1}{n_0+1}
\end{array}
\right.
\end{align}
Those two sets of drivers do not see each other, that means that they do not evolve on the same physical road but on two identical copies of the real line. We can integrate \eqref{microdelay} on $[0,\tau]$ for the two sets and we find that: $$(Y_j-X_j)(\tau)=\frac{n_0}{n_0+1}\tau-\frac{1}{n_0+1}-\int_{-\tau}^0 e^{n_0 u} du=\frac{n_0}{n_0+1}\tau-\frac{1}{n_0+1}-\frac{1-e^{-n_0\tau}}{n_0+1}>0. $$
Therefore, the initial order is disrupted even if: $$\frac{1}{n_0+1}\leq x_i^0(t)- y_i^0(t)\hspace{2cm}  (i,t)\in \mathbb{Z}\times [-\tau,0].$$ 
\end{example}

\section{Convergence} \label{proofconv}

This section is mainly devoted to the proof of Theorem \ref{convergenceth}. The unstable stationnary case will be given as the final remark of this section. 

\subsection{Proof of Theorem \ref{convergenceth}}
\begin{proof}[Proof of Theorem 1.1]
\textcolor{white}{.} \\
Let us first show that $u^\varepsilon$ is globally Lipschitz continuous in space and time uniformly in $\varepsilon$. 

In time: By looking at \eqref{exo}, we remark that $u^\varepsilon$ $||F||_\infty-$Lipschitz continuous in time.

%on a $u^\varepsilon$ sous-solution de $\partial_t u^\varepsilon (x,t) \leq 1$. 
%
%A $(x_0,t)$ fixé, $g: h \mapsto u^\varepsilon(x_0,t)+h$ est solution (classique) de $g'=1$ et $p:h\mapsto u^\varepsilon (x_0,t+h)$ en est sous-solution (de viscosité). 
%
%On a $p(0)=g(0)$ donc par principe de comparaison: $p\leq g$ soit: $$u^\varepsilon(x_0,t+h)-u^\varepsilon (x_0,t) \leq h, \forall h \geq 0.$$ On prouve de façon analogue que $$u^\varepsilon(x_0,t+h)-u^\varepsilon (x_0,t) \geq -h, \forall h \geq 0$$ et finalement on a le caractère 1-lip en temps (donc unif en espace).

%Par un principe de comparaison immédiat, on a donc $u^\varepsilon (x_0,t)\leq u^\varepsilon (x_0,0)+t \forall x_0\in\mathbb{R}$ (OK on a égalité des CI).

In space: The equation is translation invariant in space and invariant by addition of constants to the solutions. Let $h>0$. 

The solution corresponding to the initial condition $u_1: (x,t)\mapsto u_0(x+h,t)$ is the function $w: (x,t)\mapsto u^\varepsilon(x+h,t)$.

The one associated to $u_2: (x,t)\mapsto u_0(x,t)+2 L h$ is $v: (x,t)\mapsto u^\varepsilon(x,t)+2 L h$.
%Let us define $d(x,t)=v(x,t)-u(x,t)$ and $L_1=\max(L,Lh)$. By \eqref{assumptions}, we have $d\geq Lh$. We then have, for $\tau'\in [0,\tau], (x,t)\in \mathbb{R}\times [-\varepsilon\tau,0]$: 
%$$d(x,t-\varepsilon\tau')\leq d(x,t)+2L_1\varepsilon \tau' \leq d(x,t)+2L_1\varepsilon \tau\leq \left(1+\frac{2L_1 \varepsilon\tau}{Lh}\right)d(x,t). $$ 
%%%%%%%%%%%%
%Let us define $\rho_c: \tau' \mapsto 1+\frac{2L_1\varepsilon\tau}{Lh}$. As $\rho_c$ is a constant function, \eqref{condrho} is reduced to: 
%$$\rho_c^2\varepsilon\tau+1<\rho_c, $$
%or equivalently: $$\left(1+\frac{2L_1\varepsilon\tau}{Lh}\right)^2<\frac{2L_1}{Lh} $$
%which is true for $\varepsilon$ small enough by definition of $L_1$.
%
%The conditions of the strict comparison principle are fulfilled with $\delta = Lh$, $t_0=0$, $R=+\infty$ and $\rho=\rho_c$. Then, by Theorem \ref{compa} and its remark we have for all $(x,t)\in \mathbb{R}\times [0,T]$: $$u^\varepsilon(x+h,t)\leq u^\varepsilon(x,t)+2 L h. $$ 
%Analogously, we can show that: $$u^\varepsilon(x+h,t)\geq u^\varepsilon(x,t)-2 L h. $$

We define $\psi_1: (x,t)\mapsto \frac{1}{\varepsilon} w(\varepsilon x, \varepsilon t)$ and $\psi_2: (x,t)\mapsto \frac{1}{\varepsilon} v(\varepsilon x, \varepsilon t) $. 

$\psi_1$ solves: 
\begin{align}
\left\{
\begin{array}{l}
  \partial_t u (x,t) =F({u (x+1,t-\tau_0(\varepsilon x))-u (x,t- \tau_0(\varepsilon x))}) \hspace{1 cm} (x,t)\in \mathbb{R}\times (0,T),  \label{psi1} \\
  u (x,t)=\frac{1}{\varepsilon} u_0(\varepsilon x+h,\varepsilon t) \hspace{4 cm} (x,t) \in \mathbb{R} \times [-2 \tau;0].
\end{array}
\right.
\end{align}
$\psi_2$ solves: 
\begin{align}
\left\{
\begin{array}{l}
  \partial_t u (x,t) =F({u (x+1,t-\tau_0(\varepsilon x))-u (x,t- \tau_0(\varepsilon x))}) \hspace{1 cm} (x,t)\in \mathbb{R}\times (0,T),  \label{psi2} \\
  u (x,t)=\frac{1}{\varepsilon} u_0(\varepsilon x,\varepsilon t)+\frac{2Lh}{\varepsilon} \hspace{3.6 cm} (x,t) \in \mathbb{R} \times [-2 \tau;0].
\end{array}
\right.
\end{align}
By \eqref{assumptions}, $\psi_1$ and $\psi_2$ are $L-$ Lipschitz continuous functions on $\mathbb{R}\times [-2\tau,0]$. 

Hence, $d:=\psi_2-\psi_1$ is a $2L-$ Lipschitz continuous function on $\mathbb{R}\times [-2\tau,0]$. Otherwise, we have: 
\begin{align}
d(x,t)= \frac{1}{\varepsilon} \left( u_0(\varepsilon x,\varepsilon t)-u_0(\varepsilon x+h,\varepsilon t)\right)+\frac{2Lh}{\varepsilon}\geq -\frac{Lh}{\varepsilon}+\frac{2Lh}{\varepsilon}=\frac{Lh}{\varepsilon} \hspace{0.5cm} (x,t) \in \mathbb{R} \times [- \tau;0]\label{dgood}.
\end{align}
Let us now consider $\rho_m$ any solution to \eqref{condrho} for $\tau<\frac{1}{eC_F}$ (see Proposition \ref{A}). We recall that \eqref{condrho} implies $\rho_m\geq \rho_m(0)>1$.

We have: $$d(x,t-\tau')\leq d(x,t)+2L\tau \hspace{0.5cm}\tau'\in[0,\tau], (x,t)\in \mathbb{R}\times [-\tau,0].$$
Therefore, to get: $$d(x,t-\tau')\leq \rho_m(\tau')d(x,t) \hspace{0.5cm}\tau'\in[0,\tau], (x,t)\in \mathbb{R}\times [-\tau,0],$$ 
it is sufficient to verify: 
$$2L\tau\leq (\rho_m(\tau')-1)d(x,t) \hspace{0.5cm}\tau'\in[0,\tau], (x,t)\in \mathbb{R}\times [-\tau,0].$$
As $\rho_m$ is a non-decreasing function, and as $d\geq 0$, it is finally sufficient to verify: 
$$\frac{2L\tau}{\rho_m(0)-1}\leq d(x,t) \hspace{0.5cm}\tau'\in[0,\tau], (x,t)\in \mathbb{R}\times [-\tau,0].$$
Thanks to \eqref{dgood}, this will be the case for $\varepsilon$ small enough. 
The conditions of the strict comparison principle are fulfilled with $\delta = \frac{Lh}{\varepsilon}$, $t_0=0$, $R=+\infty$ and $\rho=\rho_m$. Then, by Theorem \ref{compa} we have $\psi_2-\psi_1>0$ in $\mathbb{R}\times [0,T)$ and this implies for all $(y,s)\in \mathbb{R}\times [0,T)$: $$u^\varepsilon(y+h,s)\leq u^\varepsilon(y,s)+2 L h. $$ 
Analogously, we can show that: $$u^\varepsilon(y+h,s)\geq u^\varepsilon(y,s)-2 L h. $$

Hence, $u^\varepsilon$ is $2L$-Lipschitz continuous in space. Therefore, we can define the relaxed upper and lower semi-limits  $\bar u(x,t)=\lim_{\varepsilon \rightarrow 0} \sup^* u^\varepsilon(x,t)$ and $ \underline {u}(x,t)=\lim_{\varepsilon \rightarrow 0} \inf_* u^\varepsilon(x,t)$. By definition, we have $\underline {u} \leq \bar u$.

Given $\nu>0$, for $\varepsilon$ small enough, we can apply Theorem \ref{compa} on $v_b: (x,t)\mapsto \frac{1}{\varepsilon} \left(u_0(\varepsilon x,\varepsilon t)+\nu+(||F||_\infty+L) \varepsilon t \right)$ %%%%
%%%%%%%%%%%%%%%%%%%%%%%%%%%
 and on $u_b: (x,t)\mapsto \frac{1}{\varepsilon} u^\varepsilon (\varepsilon x,\varepsilon t) $ and we get: $$u^\varepsilon(x,t) \leq u_0(x,t)+\nu+(||F||_\infty+L) t \qquad (x,t)\in \mathbb{R}\times [-2\varepsilon \tau,T), $$ %%%%%%%%%%%%%
%%%%%%%%%%
which gives for vanishing $\nu$: $$u^\varepsilon(x,t) \leq u_0(x,t)+(||F||_\infty+L) t \qquad (x,t)\in \mathbb{R}\times [-2\varepsilon \tau,T). $$
%%%%%%%%%%%%%%
Similarly, we get: $$u^\varepsilon(x,t) \geq u_0(x,t)-(||F||_\infty+L) t \qquad (x,t)\in \mathbb{R}\times [-2\varepsilon \tau,T), $$
and so: $$|u^\varepsilon(x,t) -u_0(x,t)|\leq (||F||_\infty+L) t \qquad (x,t)\in \mathbb{R}\times [-2\varepsilon \tau,T). $$
%%%%%%%%%%
This implies: $$\bar u(0,x)= \underline{u}(0,x)=u_0(x,0) \qquad x\in\mathbb{R}.$$
%%%%%%%
To get the convergence, it is sufficient to show that $\bar u$ and $\underline{u}$ are respectively a subsolution and a supersolution of \eqref{exo2}. Indeed, by using a classical comparison principle we will then get: 
$$\underline{u}\leq \bar{u}\leq u^0\leq\underline {u}$$
Let us only show by contradiction that $\bar u$ is a subsolution of \eqref{exo2} (the other one being very similar). Assume that there exist $(\bar x, \bar t)$, $\varphi\in C^1_{x,t}$, $(r,\theta, \eta)\in (0,+\infty)^3$ such that: 

\begin{align}
\left\{ \label{testfun}
\begin{array}{l}
  \bar u(\bar x, \bar t)=\varphi (\bar x, \bar t), \\
  \bar u < \varphi \mbox{ in } B_{2r}(\bar x, \bar t)\setminus\{(\bar x, \bar t)\}, \\
  \bar u \leq \varphi -2 \eta \mbox{ in } B_{2r}(\bar x, \bar t)\setminus B_{r}(\bar x, \bar t),\\
  \partial_t\varphi (\bar x, \bar t)= 2 \theta + F(\partial_x \varphi (\bar x, \bar t)), \\
  \partial_t\varphi ( x, t)\geq \theta + F(\partial_x \varphi (x,t)) \mbox{ in } B_{r}(\bar x, \bar t),
\end{array}
\right.
\end{align}
where we set $B_s(p,q):=(p-s,p+s)\times (q-s,q+s)$. 

As $u^\varepsilon$ and $\varphi$ are continuous, we can define:  
\begin{align}
M_\varepsilon:= \max_{B_{2r}(\bar x, \bar t)} (u^\varepsilon- \varphi):=(u^\varepsilon- \varphi)(x_\varepsilon, t_\varepsilon).
\end{align}

$M_\varepsilon\geq -\eta$ for $\varepsilon$ small enough and hence $(x_\varepsilon, t_\varepsilon)\in B_{r}(\bar x, \bar t)$. 

%On a: $M_\varepsilon \rightarrow 0$ et $P_\varepsilon \rightarrow (\bar x, \bar t)$.

Let us define: $\varphi^\varepsilon:=\varphi+M_\varepsilon$. This function satisfies: 
\begin{align}
\left\{ \label{testfun2}
\begin{array}{l}
  \varphi^\varepsilon(x_\varepsilon, t_\varepsilon)=u^\varepsilon(x_\varepsilon, t_\varepsilon)\\
  \partial_t\varphi^\varepsilon ( x, t)\geq \theta + F(\partial_x \varphi^\varepsilon (x,t)) \mbox{ in } B_{r}(\bar x, \bar t).
\end{array}
\right.
\end{align}
By regularity of $\varphi^\varepsilon$ and $F$, for $\varepsilon$ small enough, we have in $B_{r}(\bar x, \bar t) $: 
\begin{align}
 \partial_t \varphi^\varepsilon(x,t) \geq F\bigg(\frac{\varphi^\varepsilon (x+\varepsilon,t-\varepsilon\tau_0( x))-\varphi^\varepsilon (x,t-\varepsilon \tau_0( x))}{\varepsilon}\bigg)+\frac{\theta}{2}.
\end{align}
Let us define: $d^\varepsilon(x,t)=\varphi^\varepsilon ( x, t)-u^\varepsilon ( x, t)$. In $B_{2r}(\bar x, \bar t)\setminus B_{r}(\bar x, \bar t)$ we have: 
\begin{align}
d^\varepsilon(x,t)=\varphi ( x, t)-u^\varepsilon ( x, t)+M_\varepsilon\geq \frac{3\eta}{2} - \eta= \frac{\eta}{2}. \label{minordeps}
\end{align}
Using the fact that $u^\varepsilon$ is $||F||_\infty-$Lipschitz continuous in time and that $\varphi$ is smooth, we remark that $d^\varepsilon$ is Lipschitz continuous in $B_{2r}(\bar x, \bar t)$. Let $K$ denote its Lipschitz constant. 
Let us define: $$d_1: (x,t)\mapsto \frac{1}{\varepsilon} d^\varepsilon (\varepsilon x, \varepsilon t).$$
$d_1$ is $K-$ Lipschitz continuous in $B_{\frac{2r}{\varepsilon}}\left(\frac{\bar x}{\varepsilon}, \frac{\bar t}{\varepsilon}\right)$. Thanks to \eqref{minordeps}, we have in $B_{\frac{2r}{\varepsilon}}\left(\frac{\bar x}{\varepsilon}, \frac{\bar t}{\varepsilon}\right)\setminus B_{\frac{r}{\varepsilon}}\left(\frac{\bar x}{\varepsilon}, \frac{\bar t}{\varepsilon}\right)$:
\begin{align}
d_1(x,t)\geq \frac{\eta}{2\varepsilon}\label{d1good}.
\end{align}
%Let us define $K_0:=\max(K,\eta)$.
%
%Thanks to the previous estimate on $d^\varepsilon$, we have for $(x,t)\in B_{2r}(\bar x, \bar t)\setminus B_{r}(\bar x, \bar t)$ and $\tau'\in[0,\tau]$: $$d^\varepsilon(x,t-\varepsilon\tau')\leq d^\varepsilon(x,t)+K_0\varepsilon \tau' \leq d^\varepsilon(x,t)+K_0\varepsilon \tau\leq \left(1+\frac{2K_0\varepsilon\tau}{\eta}\right)d^\varepsilon(x,t). $$
Let us now consider $\rho_1$ any solution to \eqref{condrho} for $\tau<\frac{1}{eC_F}$ (see Proposition \ref{A}). We recall that \eqref{condrho} implies $\rho_1\geq \rho_1(0)>1$.

We have: $$d_1(x,t-\tau')\leq d_1(x,t)+K\tau \hspace{0.5cm}\tau'\in[0,\tau], (x,t)\in B_{\frac{2r}{\varepsilon}}\left(\frac{\bar x}{\varepsilon}, \frac{\bar t}{\varepsilon}\right)\setminus B_{\frac{r}{\varepsilon}}\left(\frac{\bar x}{\varepsilon}, \frac{\bar t}{\varepsilon}\right).$$
Therefore, to get: $$d_1(x,t-\tau')\leq \rho_1(\tau')d_1(x,t) \hspace{0.5cm}\tau'\in[0,\tau], (x,t)\in B_{\frac{2r}{\varepsilon}}\left(\frac{\bar x}{\varepsilon}, \frac{\bar t}{\varepsilon}\right)\setminus B_{\frac{r}{\varepsilon}}\left(\frac{\bar x}{\varepsilon}, \frac{\bar t}{\varepsilon}\right),$$ 
it is sufficient to verify: 
$$K\tau\leq (\rho_1(\tau')-1)d_1(x,t) \hspace{0.5cm}\tau'\in[0,\tau], (x,t)\in B_{\frac{2r}{\varepsilon}}\left(\frac{\bar x}{\varepsilon}, \frac{\bar t}{\varepsilon}\right)\setminus B_{\frac{r}{\varepsilon}}\left(\frac{\bar x}{\varepsilon}, \frac{\bar t}{\varepsilon}\right).$$
As $\rho_1$ is a non-decreasing function, and as $d_1\geq 0$, it is finally sufficient to verify: 
$$\frac{K\tau}{\rho_1(0)-1}\leq d_1(x,t) \hspace{0.5cm}\tau'\in[0,\tau], (x,t)\in B_{\frac{2r}{\varepsilon}}\left(\frac{\bar x}{\varepsilon}, \frac{\bar t}{\varepsilon}\right)\setminus B_{\frac{r}{\varepsilon}}\left(\frac{\bar x}{\varepsilon}, \frac{\bar t}{\varepsilon}\right).$$
Thanks to \eqref{d1good}, this will be the case for $\varepsilon$ small enough.
We choose $\varepsilon$ such that $\max(1,\tau)<\frac{r}{8\varepsilon}$. The previous inequalities enable us to apply the strict comparison principle with $\delta=\frac{\eta}{2\varepsilon}$, $R=\frac{3r}{2\varepsilon}$, $x_0=\frac{\bar{x}}{\varepsilon}$, $T=\frac{\bar{t}}{\varepsilon}+R$ and $t_0=\frac{\bar{t}}{\varepsilon}-R$. Indeed, we have: 
%\begin{align}
%\left\{
%\begin{array}{l}
%  \overline{ [x_0-R-\varepsilon , x_0+R+\varepsilon ] \setminus [x_0-R,x_0+R] } \times [t_0-\varepsilon\tau,T)\subset B_{2r}(\bar x, \bar t)\setminus B_{r}(\bar x, \bar t) \label{inclu} \\
%  [x_0-R,x_0+R]\times [t_0-\varepsilon\tau,t_0] \subset B_{2r}(\bar x, \bar t)\setminus B_{r}(\bar x, \bar t).
%\end{array}
%\right.
%\end{align}
\begin{align}
\left\{ \label{inclu}
\begin{array}{l}
[x_0-R- 1 , x_0+R+1 ] \setminus ]x_0-R,x_0+R[\times [t_0-\tau,T)\subset B_{\frac{2r}{\varepsilon}}\left(\frac{\bar x}{\varepsilon}, \frac{\bar t}{\varepsilon}\right)\setminus B_{\frac{r}{\varepsilon}}\left(\frac{\bar x}{\varepsilon}, \frac{\bar t}{\varepsilon}\right) \\
 \mbox {}[x_0-R,x_0+R] \times [t_0-\tau,t_0] \subset B_{\frac{2r}{\varepsilon}}\left(\frac{\bar x}{\varepsilon}, \frac{\bar t}{\varepsilon}\right)\setminus B_{\frac{r}{\varepsilon}}\left(\frac{\bar x}{\varepsilon}, \frac{\bar t}{\varepsilon}\right).
\end{array}
\right.
\end{align}
%%%%voir mon dessin au brouillon!!!!%%%%
%%%%%%%ATTENTION DANS LA VERSION CONVERGENCE, LE R+1 est remplacé par un R+Epsilon en rescalé!!! C'est comme t-0-Tau qui devient t_0-Epsilon Tau!!!!
Hence, by Theorem \ref{compa}, we have $d_1>0$ in $[x_0-R,x_0+R]\times [t_0,T)$ which is in contradiction with the definition of $(x_\varepsilon, t_\varepsilon)$ as we have: 
\begin{align*}
\left\{
\begin{array}{l}
\left(\frac{x_\varepsilon}{\varepsilon}, \frac{t_\varepsilon}{\varepsilon}\right)\in B_{\frac{r}{\varepsilon}}\left(\frac{\bar x}{\varepsilon}, \frac{\bar t}{\varepsilon}\right)\subset [x_0-R,x_0+R]\times [t_0,T) , \\
 d_1\left(\frac{x_\varepsilon}{\varepsilon}, \frac{t_\varepsilon}{\varepsilon}\right)=0.
\end{array}
\right.
\end{align*}
\end{proof}
%%%%%%%%%%%%%%%
\subsection{A special case: homogenization for any reaction time}
%%%%%%%%%%%%%%%%
A natural question arises. What happens for higher reaction times? Example \ref{nonconserv} highlights the fact that the initial dynamics is very important and that reaction times cannot be considered separately. The answer to this question is not invariable and the following example shows that the expected macrosopic model can be derived for any reaction time for a special initial condition. 
%%%%%%%%%%%%%%%%%
\begin{example} \label{unstable}
For $T=+\infty$, let us consider the case where all drivers have the same reaction time $\tau\in (0,+\infty)$. Let $L>0$ be the common interdistance between all the vehicles. We consider that the vehicles do not move at initial times:
$$x_i^0(t)=L i, \qquad t\in[-\tau,0].$$

This corresponds to: $$u_0(x,t)=L x, \qquad (x,t)\in\mathbb{R}\times [-\tau,0].$$

By incremental construction (or directly by uniqueness), we see that the solution $u^\varepsilon$ to \eqref{exo} for this initial condition does not depend on $\varepsilon$ and is given by: $$u^\varepsilon(x,t)=Lx+F(L)t, \qquad (x,t)\in\mathbb{R}\times [0,+\infty).$$

The unique solution $u^0$ of \eqref{exo2} corresponding to this initial linear data is also given by the same expression. Therefore we have: $$u^\varepsilon(x,t)=u^0(x,t)=Lx+F(L)t, \qquad (x,t)\in\mathbb{R}\times [0,+\infty).$$

\end{example}
%On a donc: 
%
%\begin{align}
% \partial_t \varphi^\varepsilon(P_\varepsilon) \leq F\bigg(\frac{u^\varepsilon (x_\varepsilon+\varepsilon,t_\varepsilon-\varepsilon\tau)-u^\varepsilon (x_\varepsilon,t_\varepsilon-\varepsilon \tau)}{\varepsilon}\bigg)
%\end{align}
\section{A counter-example to homogenization} \label{counter}
%%%%%%%%%%%%%%
The goal of this section is to exhibit a counter-example derived from Example \ref{unstable}. We will first give the explicit expressions of $F$, of the vehicles' initial positions and the value of $\tau$ and give a list of lemmas that will be useful to finally prove Theorem \ref{counterexample}.
%%%%%%%%%%%%%%%%%%%%%

For simplicity, we fix $T=+\infty$. Let us consider the same $L>0$ and $(k,\beta, \alpha)\in (0,+\infty)^3$ with the condition $\alpha>4 \beta L$. We consider the following $F$: $$F(x)=k+\beta(x-L)^2+\alpha (x-L) \qquad x\in [0,2L],$$
and we continuously extend $F$ to $\mathbb{R}$ by constants; hence $F$ satisfies \eqref{assumptions}. 

We here choose the common reaction time: $\tau=\frac{\pi}{4\alpha}$.

Let us consider $A\in (0,\frac{L}{2})$. We now introduce the initial positions of the vehicles:
\begin{align} \label{initi}
 x_{i}^0(t)=i L+(-1)^i\frac{A}{2} \sin (2\alpha t) \qquad (i,t)\in \mathbb{Z}\times [-\tau,0].
\end{align}
%%%%%%%%%%%%%
\begin{remark} \label{ini}
We have $x_{i+2}^0-x_{i}^0=2L $ but $x_{i+1}^0-x_{i}^0 \neq L$. We recover the initial data of Example \ref{unstable} for $A=0$. This means that in this case, the vehicles alternatively oscillate around the previous stationnary positions at initial times. Those oscillations will be essential in the construction of the counter-example: they will last for all time and will raise the time velocity of the vehicles that will become striclty superior to the value of the velocity function of the corresponding macroscopic space gradient. 
\end{remark}
%%%%%%%%%
The key relationship between $\tau$ and $\alpha$ will allow the periodic oscillations to remain for all times as expressed in the following lemma: 
\begin{lemma}\label{osci}
Under the previous initial conditions, we have:
$$X_{i+1}(t)-X_i(t)=L+A (-1)^{i+1}  \sin (2\alpha t) \qquad t\geq 0.$$
\end{lemma}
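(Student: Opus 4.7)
The plan is to construct an explicit candidate solution, verify that it satisfies both the delay equation and the prescribed initial data, and then conclude by the uniqueness statement of Proposition 2.1. I would try the ansatz
\[
\tilde X_i(t) \;=\; iL + (-1)^i \tfrac{A}{2}\sin(2\alpha t) + G(t),
\]
with a parity-independent drift $G$ satisfying $G\equiv 0$ on $[-\tau,0]$ and $G(0)=0$. By construction, this matches the initial data \eqref{initi} on $[-\tau,0]$, and, crucially, the consecutive spacing
\[
\tilde X_{i+1}(t)-\tilde X_i(t)=L+(-1)^{i+1}A\sin(2\alpha t)
\]
is independent of $G$; this is exactly the identity to be proved.

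To check the DDE, I would exploit the resonance condition $2\alpha\tau=\pi/2$ built into the choice $\tau=\pi/(4\alpha)$. It yields $\sin(2\alpha(t-\tau))=-\cos(2\alpha t)$, so the delayed spacing becomes $L+(-1)^i A\cos(2\alpha t)$. Since $A<L/2$ this argument lies in $[L/2,3L/2]\subset[0,2L]$, where $F$ has the explicit polynomial form. A direct computation gives
\[
F\!\left(L+(-1)^i A\cos(2\alpha t)\right)=k+\beta A^2\cos^2(2\alpha t)+(-1)^i\alpha A\cos(2\alpha t),
\]
and this matches $\tfrac{d\tilde X_i}{dt}(t)=G'(t)+(-1)^i\alpha A\cos(2\alpha t)$ precisely when $G'(t)=k+\beta A^2\cos^2(2\alpha t)$. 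This scalar ODE with $G(0)=0$ determines $G$ explicitly, so $\tilde X$ is a classical solution of the DDE on $[0,+\infty)$.

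Proposition \ref{uniqeps1} (applied, as noted there, equivalently to the microscopic system \eqref{microdelay}) then forces $X_i\equiv\tilde X_i$, and the desired identity follows by subtraction. The only genuinely nontrivial point is recognising that the phase shift $2\alpha\tau=\pi/2$ converts the initial $(-1)^i\sin(2\alpha t)$ oscillation into a $(-1)^i\cos(2\alpha t)$ oscillation after a delay $\tau$, and that this is exactly what makes the parity-dependent component of $F$ cancel against the parity-dependent component of $\tfrac{d}{dt}\tilde X_i$ without imposing any constraint on the common drift $G$; once this is observed the verification reduces to a line of algebra.
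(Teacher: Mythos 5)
Your proof is correct, but it takes a genuinely different route from the paper's. The paper works directly with the gaps $d_i:=X_{i+1}-X_i$ of the actual solution: using the invariant $X_{i+2}-X_i=2L$ and the symmetry of $F$ about $L$ (the quadratic term cancels in the difference $F(2L-d)-F(d)=-2\alpha(d-L)$), it derives the linear delay ODE $\bar d_i'(t)=-2\alpha\,\bar d_i(t-\tau)$ for $\bar d_i=d_i-L$ and solves it by the method of steps, i.e.\ by induction on the intervals $[n\tau,(n+1)\tau]$. You instead exhibit an explicit global solution $\tilde X_i(t)=iL+(-1)^i\tfrac{A}{2}\sin(2\alpha t)+G(t)$ and conclude by the uniqueness of Proposition \ref{uniqeps1}. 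Both arguments hinge on the same two facts: the resonance $2\alpha\tau=\pi/2$ turning the delayed $\sin$ into $-\cos$, and the bound $A<L/2$ keeping the argument of $F$ inside $[0,2L]$ where the polynomial formula applies. Your version buys two things: it avoids the induction entirely (the spacing formula holds on $[-\tau,0]$ by the initial data and on $[0,+\infty)$ by the ansatz, so the delayed argument of $F$ is controlled uniformly for all $t>0$), and it produces as a by-product the explicit drift $G'(t)=k+\beta A^2\cos^2(2\alpha t)$, which is exactly the quantity $\partial_t u^\varepsilon(0,\cdot)$ that the paper has to recompute separately to obtain \eqref{derivative}. The paper's gap argument is slightly more economical in that it never needs to integrate the parity-independent part. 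One small point you should make explicit: $G$ has a corner at $t=0$ (left derivative $0$, right derivative $k+\beta A^2$), which is harmless because solutions of \eqref{exo3} are only required to be $C^1$ in time on $(0,T)$, matching the solution concept of Proposition \ref{uniqeps1}.
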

%%%%%%%%%%%%%
\begin{remark}
We have $X_{i+2}-X_i=2L$ for all time. 
\end{remark}
%%%%%%%%%%%%
\begin{proof}
The proof is based on the incremental construction of the solutions and is thus an induction proof. We show the result for each $[n\tau,(n+1)\tau], n\in\mathbb{N}$.  We only perform the first step as the next ones are identical. 

Let us define: $d_i:=X_{i+1}-X_i$. 

For $t\in[0,\tau]$, we have:
\begin{align*}
d_i'(t)=F(X_{i+2}(t-\tau)-X_{i+1}(t-\tau))-F(d_i(t-\tau)).
\end{align*}
As $t-\tau\in[-\tau,0]$: 
\begin{align*}
d_i'(t)=F(x_{i+2}^0(t-\tau)-x_{i+1}^0(t-\tau))-F(d_i(t-\tau)).
\end{align*}
Thanks to Remark \ref{ini}, we get:
\begin{align*}
d_i'(t)=F(2L-d_i(t-\tau))-F(d_i(t-\tau)).
\end{align*}
With the expression of $F$, this equivalently gives for $\bar{d_i}:=d_i-L$:
\begin{align*}
\bar{d_i}'(t)=-2\alpha \bar{d_i}(t-\tau).
\end{align*}
Thanks to \eqref{initi}, we have:
\begin{align*}
\bar{d_i}'(t)=-2\alpha A (-1)^{i+1}  \sin (2\alpha t-2\alpha\tau).
\end{align*} 
We remind that we have chosen $\tau=\frac{\pi}{4\alpha}$ to get: 
\begin{align*}
\bar{d_i}'(t)=2\alpha A (-1)^{i+1} \cos (2\alpha t).
\end{align*}
We then integrate:
\begin{align*}
\bar{d_i}(t)=A (-1)^{i+1}  \sin (2\alpha t).
\end{align*}
This gives the result for $n=0$. 
\end{proof}
%%%%%%%%%%%%%
For $\varepsilon>0$, we now consider $u^\varepsilon$ associated to this initial data and we recall the relationship: 
\begin{align}\label{relationmicro}
X_i(t)=\frac{u^\varepsilon(i\varepsilon,t\varepsilon)}{\varepsilon} \qquad(i,t)\in\mathbb{Z}\times [-\tau,+\infty).
\end{align}
Thanks to the previous remark, we have: 
\begin{align*}
\frac{u^\varepsilon((i+2)\varepsilon,t\varepsilon)-u^\varepsilon(i\varepsilon,t\varepsilon)}{2\varepsilon}=L \qquad (i,t)\in\mathbb{Z}\times [-\tau,+\infty),
\end{align*}
or equivalently: 
\begin{align*}
\frac{u^\varepsilon((i+2)\varepsilon,s)-u^\varepsilon(i\varepsilon,s)}{2\varepsilon}=L \qquad (i,s)\in\mathbb{Z}\times [-\varepsilon\tau,+\infty).
\end{align*}
For $s=0$, this gives for the initial condition: 
\begin{align} \label{gradini}
\frac{u_0((i+2)\varepsilon,0)-u_0(i\varepsilon,0)}{2\varepsilon}=L \qquad (i,\varepsilon)\in\mathbb{Z}\times (0,+\infty).
\end{align}
Moreover, thanks to \eqref{initi}, we have:
$$x_0^0(0)=0.$$
Thus, we also have:
\begin{align}\label{zeroini}
u_0(0,0)=0.
\end{align} 
This leads to the following lemma:
\begin{lemma}
The initial condition is given on the real line by: $u_0(x,0)=L x$.
\end {lemma}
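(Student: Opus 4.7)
The plan is to exploit the fact that equations \eqref{gradini} and \eqref{zeroini} are required to hold for \emph{every} $\varepsilon>0$, which pins down $u_0(\,\cdot\,,0)$ on a dense (in fact, all of $\mathbb{R}$) subset without any further input.

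First I would observe that \eqref{gradini} can be rewritten as the discrete Dirichlet-type recurrence
\[
u_0\bigl((i+2)\varepsilon,0\bigr)-u_0(i\varepsilon,0)=2L\varepsilon, \qquad i\in\mathbb{Z},\ \varepsilon>0.
\]
Iterating this relation starting from $i=0$ and using \eqref{zeroini} as the anchor value $u_0(0,0)=0$ gives by a trivial induction
\[
u_0(2k\varepsilon,0)=2kL\varepsilon \qquad \text{for all } k\in\mathbb{Z},\ \varepsilon>0.
\]
This is the only real content of the argument. The negative values of $k$ are handled by running the recurrence backwards, or equivalently by replacing $i\mapsto i-2$.

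Next, to conclude, I would note that the point of having the identity available for \emph{every} $\varepsilon>0$ is that the set $\{2k\varepsilon:\ k\in\mathbb{Z},\ \varepsilon>0\}$ already equals $\mathbb{R}$: given an arbitrary $x\in\mathbb{R}\setminus\{0\}$, it suffices to choose $k=\operatorname{sgn}(x)$ and $\varepsilon=|x|/2>0$, whereby $2k\varepsilon=x$ and the formula above yields
\[
u_0(x,0)=2kL\varepsilon=Lx.
\]
For $x=0$ the identity $u_0(0,0)=0=L\cdot 0$ is exactly \eqref{zeroini}. Hence $u_0(x,0)=Lx$ on all of $\mathbb{R}$.

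There is no serious obstacle here: the Lipschitz regularity of $u_0$ from \eqref{assumptions} would allow an alternative argument by density (one only needs the identity on a dense subset and then extends by continuity), but the direct reparametrization above is cleaner and avoids invoking regularity. The substantive work was done upstream in Lemma \ref{osci} and the derivation of \eqref{gradini}--\eqref{zeroini}; this lemma is merely the bookkeeping that converts those pointwise lattice identities into a closed-form expression of $u_0(\cdot,0)$.
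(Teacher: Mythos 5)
Your proof is correct, and it takes a genuinely different (and simpler) route than the paper's. The paper fixes $x$ and $h>0$, approximates them by lattice points $2m\varepsilon$ and $2(m+n)\varepsilon$ with $m=\lfloor x/(2\varepsilon)\rfloor$, $n=\lfloor h/(2\varepsilon)\rfloor$, telescopes \eqref{gradini} across the middle portion, and then sends $\varepsilon\to 0$, using the $L$-Lipschitz bound from \eqref{assumptions} to control the two boundary terms; only afterwards does it integrate the resulting difference quotient against the anchor \eqref{zeroini}. You instead exploit the fact that \eqref{gradini} holds for \emph{every} $\varepsilon>0$ and choose $\varepsilon$ adapted to the target point: with $\varepsilon=|x|/2$ the point $x$ lies exactly on the lattice $2\varepsilon\mathbb{Z}$, so a single application of the recurrence anchored at \eqref{zeroini} yields $u_0(x,0)=Lx$ with no limiting procedure and no appeal to regularity (indeed, for $x>0$ the case $i=0$ of \eqref{gradini} alone suffices, so even the induction over $k$ is dispensable). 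What the paper's density-plus-continuity argument buys is robustness: it would survive if \eqref{gradini} were only known along a sequence $\varepsilon_j\to 0$, since it only needs the union of the lattices to be dense and $u_0$ to be continuous. Your argument uses the full strength of ``all $\varepsilon>0$'' and is the cleaner proof of the lemma as stated.
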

\begin{proof}
Let us consider $\varepsilon>0$, $h>0$ and $x\in\mathbb{R}$. We define:
\begin{align}
\left\{
\begin{array}{l}
  n:=\lfloor \frac{h}{2\varepsilon} \rfloor, \\
  m:=\lfloor \frac{x}{2\varepsilon} \rfloor.
\end{array}
\right.
\end{align}
Let us now compute the rate of change:
\begin{multline}\label{rateini}
\frac{u_0(x+h,0)-u_0(x,0)}{h}=\\
\frac{u_0(x+h,0)-u_0(2(m+n)\varepsilon,0)}{h}+\frac{u_0(2(m+n)\varepsilon,0)-u_0(2m\varepsilon,0)}{h}+\frac{u_0(2m\varepsilon,0)-u_0(x,0)}{h}.
\end{multline}
We introduce a telescopic sum for the second term:
\begin{align*}
\frac{u_0(2(m+n)\varepsilon,0)-u_0(2m\varepsilon,0)}{h}=\sum_{i=0}^{n-1} \frac{u_0(2m\varepsilon+2(i+1)\varepsilon,0)-u_0(2m\varepsilon+2i\varepsilon,0)}{h},
\end{align*}
or equivalently: 
\begin{align*}
\frac{u_0(2(m+n)\varepsilon,0)-u_0(2m\varepsilon,0)}{h}=\frac{2\varepsilon}{h}\sum_{i=0}^{n-1} \frac{u_0(2m\varepsilon+2i\varepsilon+2\varepsilon,0)-u_0(2m\varepsilon+2i\varepsilon,0)}{2\varepsilon}.
\end{align*}
Thanks to \eqref{gradini}, we get: 
\begin{align*}
\frac{u_0(2(m+n)\varepsilon,0)-u_0(2m\varepsilon,0)}{h}=\frac{2n\varepsilon}{h}L.
\end{align*}
Thus, \eqref{rateini} becomes: 
\begin{align}\label{rateini2}
\frac{u_0(x+h,0)-u_0(x,0)}{h}=\frac{u_0(x+h,0)-u_0(2(m+n)\varepsilon,0)}{h}+\frac{2n\varepsilon}{h}L+\frac{u_0(2m\varepsilon,0)-u_0(x,0)}{h}.
\end{align}
Let us remind that: 
\begin{align}\label{estim}
\left\{
\begin{array}{l}
  |h-2n\varepsilon|\leq 2\varepsilon, \\
  |x-2m\varepsilon|\leq 2\varepsilon.
\end{array}
\right.
\end{align}
The left-hand side of \eqref{rateini2} does not depend on $\varepsilon$. By taking the limit when $\varepsilon\rightarrow 0$, using \eqref{estim} and the fact that $u_0$ is a $L-$Lipschitz continuous function, we get:
\begin{align*}
\frac{u_0(x+h,0)-u_0(x,0)}{h}=L.
\end{align*}
Thanks to \eqref{zeroini}, we obtain the desired result on $[0,+\infty)$:
\begin{align*}
u_0(h,0)=Lh \qquad h\geq 0.
\end{align*}
For $h<0$, we define $h':=-h>0$, and we have for $x=h$:
\begin{align*}
\frac{u_0(h+h',0)-u_0(h,0)}{h'}=L,
\end{align*}
and thus:
\begin{align*}
u_0(h,0)=-Lh'=Lh.
\end{align*}
%\begin{align*}
%\frac{u_0(x+h,0)-u_0(x,0)}{h}=\frac{u_0(x+h,0)-u_0(2(m+n)\varepsilon,0)}{h}+\sum_{i=0}^{n-1} \frac{u_0(2m\varepsilon+2(i+1)\varepsilon,0)-u_0(2m\varepsilon+2i\varepsilon,0)}{h}+\frac{u_0(2m\varepsilon,0)-u_0(x,0)}{h}
%\end{align*}
\end{proof}
%%%%%%%%%%%%%%%% DIre qu'on retrouve example pour A=0 et que homog marche pour A=0 comme on peut le remarquer à nouveau
\begin{corollary} \label{solumacro}
The solution of the macroscopic equation \eqref{exo2} corresponding to this initial data is: $$u^0(x,t)=Lx+F(L)t \qquad (x,t)\in\mathbb{R}\times[0,+\infty).$$
\end{corollary}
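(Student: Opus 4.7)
The plan is to verify directly that the candidate function $u^0(x,t)=Lx+F(L)t$ is a classical (hence viscosity) solution of the Cauchy problem \eqref{exo2} with initial condition $u^0(x,0)=Lx$ (which was identified in the previous lemma), and then invoke the uniqueness already recorded at the end of Section \ref{exun}.

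First I would compute the partial derivatives of the candidate: $\partial_t u^0(x,t)=F(L)$ and $\partial_x u^0(x,t)=L$. Since $F$ is Lipschitz continuous on $\mathbb{R}$ by \eqref{assumptions}, these derivatives are continuous, so $u^0$ is a classical $C^1$ solution and a fortiori a viscosity solution. Substituting into the Hamilton-Jacobi equation gives $\partial_t u^0(x,t)=F(L)=F(\partial_x u^0(x,t))$, so the PDE is satisfied pointwise on $\mathbb{R}\times(0,+\infty)$. The initial condition at $t=0$ reads $u^0(x,0)=Lx$, which matches the initial datum identified in the preceding lemma.

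Next I would invoke uniqueness. The previous lemma shows that the initial datum associated with the sequence \eqref{initi} is precisely $u_0(x,0)=Lx$, which is globally Lipschitz continuous on $\mathbb{R}$, so the standard comparison principle for first-order Hamilton-Jacobi equations in the viscosity framework (see \cite{barles}, referenced at the end of Section \ref{exun}) applies and gives uniqueness of the viscosity solution to \eqref{exo2} in the class of Lipschitz continuous functions. Since $u^0(x,t)=Lx+F(L)t$ is Lipschitz continuous in $(x,t)$ and solves both the equation and the initial condition, it is the unique viscosity solution.

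There is essentially no obstacle here: the only mild subtlety is to recall that $F$ evaluated outside $[0,2L]$ is the continuous constant extension, which is still Lipschitz with the same constant $C_F$ (or less) so that the viscosity theory of \cite{barles} applies verbatim; and to recognize that the linear-in-$x$, linear-in-$t$ candidate is both smooth enough to be a classical solution and within the uniqueness class. The corollary then follows immediately.
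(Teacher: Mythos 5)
Your proposal is correct and follows the same route as the paper, which simply observes that $u^0(x,t)=Lx+F(L)t$ is a classical solution of \eqref{exo2} with initial datum $Lx$ and concludes by the uniqueness of viscosity solutions already recalled in Section \ref{exun}. Your write-up merely spells out the derivative computation and the uniqueness class explicitly, which is a harmless elaboration of the paper's one-line argument.
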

\begin{proof}
This is a classical solution and the solution is unique. 
\end{proof}
\begin{remark}
We see that we have: $$\partial_t u^0(0,t)=k=F(L) \qquad t>0.$$

That was also the case for $u^\varepsilon$ in Example \ref{unstable} but it is not true anymore for $A>0$.

Indeed, we have for $s>0$:
\begin{align*}
\partial_t u^\varepsilon (0,s)=F\left(\frac{u^\varepsilon (\varepsilon,s-\varepsilon\tau)-u^\varepsilon (0,s-\varepsilon\tau)}{\varepsilon}\right)
\end{align*}
Thanks to \eqref{relationmicro}, we get: 
\begin{align*}
\partial_t u^\varepsilon (0,s)=F\left(X_1\left(\frac{s}{\varepsilon}-\tau\right)-X_0\left(\frac{s}{\varepsilon}-\tau\right)\right).
\end{align*}
%or equivalently:
%\begin{align*}
%\partial_t u^\varepsilon (0,s)=F\left(d_0\left(\frac{s}{\varepsilon}-\tau\right)\right).
%\end{align*}
From Lemma \ref{osci}, we get for the velocity function $F$ considered:
\begin{align*}
\partial_t u^\varepsilon (0,s)=F(L)+\beta A^2 \sin^2\left(2\alpha\left(\frac{s}{\varepsilon}-\tau\right)\right)-\alpha A \sin\left(2\alpha\left(\frac{s}{\varepsilon}-\tau\right)\right),
\end{align*} 
or equivalently, for $\tau=\frac{\pi}{4\alpha}$:
\begin{align}\label{derivative}
\partial_t u^\varepsilon (0,s)=F(L)+\beta A^2 \cos^2\left(2\alpha \frac{s}{\varepsilon}\right)+\alpha A \cos\left(2\alpha\frac{s}{\varepsilon}\right).
\end{align} 
\end{remark}
We are now ready to prove Theorem \ref{counterexample}.
\begin{proof} [Proof of Theorem \ref{counterexample}]
By contradiction, we suppose that $u^\varepsilon$ converges locally uniformly towards the solution $u^0$ of \eqref{exo2} whose expression is given in corollary \ref{solumacro}.

Let us consider $s>0$ and $h>0$. We then have:
\begin{align}\label{limitmacro1}
\lim_{\varepsilon\rightarrow 0} \frac{u^\varepsilon(0,s+h)-u^\varepsilon(0,s)}{h}=\frac{u^0(0,s+h)-u^0(0,s)}{h}=F(L).
\end{align}
Otherwise, we have for $\varepsilon>0$:
\begin{align*}
\frac{u^\varepsilon(0,s+h)-u^\varepsilon(0,s)}{h}=\frac{1}{h}\int_s^{s+h} \partial_t u^\varepsilon (0,s') ds'.
\end{align*}
From \eqref{derivative}, we get:
\begin{align*}
\frac{u^\varepsilon(0,s+h)-u^\varepsilon(0,s)}{h}=F(L)+\beta \frac{A^2}{2}+\frac{\beta\varepsilon A^2}{8\alpha h} \left [\sin \left(\frac{4\alpha s'}{\varepsilon}\right)\right]_s^{s+h}+\frac{\varepsilon A}{2h} \left [\sin \left(\frac{2\alpha s'}{\varepsilon}\right)\right]_s^{s+h}
\end{align*}
As $|\sin|\leq 1$, we immediately get: 
\begin{align*}
\lim_{\varepsilon\rightarrow 0} \frac{u^\varepsilon(0,s+h)-u^\varepsilon(0,s)}{h}=F(L)+\beta \frac{A^2}{2},
\end{align*}
which is in contradiction with \eqref{limitmacro1}.
\end{proof}
\textbf{Acknowledgements.}
I would like to thank Cyril Imbert for all his support, help and remarks on this article. I would also like to thank R\'egis Monneau for all his contribution. This work was partly supported by Labex Bézout, (ANR-10-LABX-58-01) and ANR HJnet, (ANR-12-BS01-0008-01).

\bibliographystyle{siam}
\bibliography{biblio}

\end{document}